\newcommand{\RR}{\mathds{R}}
\newcommand{\CC}{\mathds{C}}
\newcommand{\EE}{\mathds{E}}
\newcommand{\cL}{\mathcal L}
\DeclareMathAlphabet\gothic{U}{euf}{m}{n}
\newcommand{\cev}[1]{\reflectbox{\ensuremath{\vec{\reflectbox{\ensuremath{#1}}}}}}
\newtheorem{theorem}{Theorem}
\newtheorem{lemma}[theorem]{Lemma}
\newtheorem{proposition}[theorem]{Proposition}
\newtheorem{corollary}[theorem]{Corollary}
\newtheorem{definition}[theorem]{Definition}
\newtheorem{remark}[theorem]{Remark}
\numberwithin{theorem}{section}
\numberwithin{equation}{section}
\date{}
\begin{document} 
%
\title{A multiplicative inequality of Riesz transform type on general Riemannian manifolds} 
\author{El Maati Ouhabaz\\
Universit\'e de Bordeaux, IMB\\
 351 Cours de la Lib\'eration 33405. Talence, France.\\
Elmaati.Ouhabaz@math.u-bordeaux.fr }

\maketitle     

\begin{abstract}
Given any  complete Riemannian manifold $M$, we prove that for every $p \in (1, 2]$ and every $\epsilon > 0$, 
$$ \| \nabla f \|_p^2 \le C_\epsilon \| \Delta^{\frac{1}{2} + \epsilon} f \|_{p}\| \Delta^{\frac{1}{2} - \epsilon} f \|_{p}.$$
The estimate is dimension free. This inequality is even proved  in a general  setting of generators  of some sub-Markov semigroups.\\
We also study Littlewood-Paley-Stein functionals both for Laplace-Beltrami operator on functions and for the Hodge-de Rham Laplacian on differential forms and prove their 
relationship to the boundedness  of the Riesz transform. 
\end{abstract}

\tableofcontents

\footnotetext{
\noindent 2020 Mathematics Subject Classification: 42B25, 58J35, 60G50.\\
\noindent {\bf Keywords}:  Riesz transforms on manifolds, Littlewood-Paley-Stein functionals, Kahane-Khintchin inequality,  Laplace-Beltrami operator, the Hodge-de Rham Laplacian,  sub-Markov semigroups.}

\section{Introduction}\label{sec1}
Let $M$ be a non-compact complete Riemannian manifold and denote by $\nabla$ and $\Delta$ the corresponding gradient and the non-negative  Laplace-Beltrami operator, respectively. It is a  classical problem in harmonic analysis on manifolds to decide whether  the  Riesz transform $R := \nabla\Delta^{-1/2}$ is bounded on $L^p(M)$. This is obviously true for $p = 2$ by integration by parts. Indeed, 
$$\| \nabla u \|_2^2 =  \int_M \Delta u. u =  \| \Delta^{1/2} u\|_2^2$$
for all $u \in W^{1,2}(M)$ and hence the operator $R$, initially  defined on the range of $\Delta^{1/2}$ (which is dense in $L^2(M)$ because $M$ is non-compact) has a bounded extension to $L^2(M)$. Note that $R$ takes values in $L^2(M, TM)$ where $TM $ is the tangent space. Alternatively, the  Riesz transform may also be defined by ${\bf d}\Delta^{-1/2}$ where ${\bf d}$ is  the differential. In this case $R$ takes values in the $L^2$ space of exact differential forms of order $1$. It is a singular integral operator with a kernel which is not smooth in general. For this reason it is a difficult problem to understand  whether $R$ extends to a bounded operator on $L^p(M)$ for some or all $p \in (1,\infty)$,
$p \not=2$. This problem has been studied  during the last decades. We do not intend to give a complete  account on the subject and we  refer the reader to \cite{Strichartz, ACDH, Bakry, Carron, Chen-Coulhon-Russ, ChenMagOuh, CoulhonDuong, CoulhonCPAM, Guillarmou-Hassell, Hassell-Sikora, Sikora} and the references therein. In particular, 
Bakry \cite{Bakry} proved that if the manifold has non-negative Ricci curvature, then the Riesz transform is bounded on $L^p$ for all $p \in (1, \infty)$. He also proved the boundedness on $L^p$ of the local Riesz transform $\nabla (1 + \Delta)^{-1/2}$ for manifolds with Ricci curvature bounded from below. Coulhon and Duong \cite{CoulhonDuong} proved the boundedness of the Riesz transform on $L^p$ for $p \in (1,2]$ under the assumption that $M$ satisfies the volume doubling property and the heat kernel of $\Delta$ enjoys a Gaussian upper bound. They also gave a counter-example for $p > 2$. The boundedness of the Riesz transform on $L^p$ for $p \in (1,2]$ for general manifolds is a longstanding  open problem. In this general setting Coulhon and Duong \cite{CoulhonCPAM} proved the following multiplicative inequality for $p \in (1,2]$, 
\begin{equation}\label{cd1}
 \| \nabla f \|_p^2   \le C \| \Delta f \|_p \| f \|_p.
 \end{equation}

In this paper  we improve this result  by proving for general  Riemannian  manifold $M$ and  $p \in (1, 2]$,
\begin{equation}\label{aR}
 \| \nabla f \|_p^2 \le C_\epsilon \| \Delta^{\frac{1}{2} + \epsilon} f \|_{p}\| \Delta^{\frac{1}{2} - \epsilon} f \|_{p}
 \end{equation}
for every  $\epsilon \in (0, \frac{1}{2}]$. 
Note that the boundedness of the Riesz transform on $L^p$ corresponds exactly to the limit case $\epsilon = 0$. 

\smallskip
Our strategy of proof is based on Littlewood-Paley-Stein functionals in which we replace the semigroup $e^{-t\Delta}$ by $F(t\Delta)$ for functions $F$ which are holomorphic in a certain  sector and decay at least as $\frac{1}{|z|^\delta}$ at infinity for some $\delta > \frac{1}{2}$. That is, we prove the boundedness on $L^p(M)$ for $p \in (1,2]$ of the functional 
$$ f \mapsto H_F(f) := \left( \int_0^\infty | \nabla F(t \Delta) f |^2\, dt \right)^{1/2}.$$
It is exactly the  decay condition on $F$ which requires  the presence of $\epsilon > 0$ in \eqref{aR}  since we will take  
$F(z) = \frac{1-e^{-z}}{z^{1/2 + \epsilon}}$.  On the other hand, for this function $F$,  the  above Littlewood-Paley-Stein functional cannot be bounded even on $L^2(\RR^d)$
when $\epsilon = 0$. Indeed, if $\epsilon = 0$ then by Fubini and the fact that $\| \nabla u \|_2 = \| \Delta^{\frac{1}{2}} u \|_2$ one has
$$\| H_F(f) \|_2^2 = \int_0^\infty \| \nabla (t\Delta)^{-1/2}(1- e^{-t\Delta}) f \|_2^2\, dt = \int_0^\infty \| (1- e^{-t\Delta}) f \|_2^2\, \frac{dt}{t}$$
which cannot be bounded by $c \|f \|_2^2$. 

The Littlewood-Paley-Stein functionals with holomorphic functions are proved recently by Cometx and Ouhabaz \cite{CometxOuhabaz} for Schr\"odinger operators $\cL = \Delta + V$ on Riemannian manifolds. We reproduce the arguments there to state  and prove several   results in  the general setting  of   generators of  sub-Markovian semigroups.\\
 We  mention  that counter-examples to the boundedness on $L^p$ for $p > 2$ of the classical Littlewood-Paley-Stein functional are discussed in \cite{CometxOuhabaz}. For those examples  \eqref{aR} is  not satisfied for any given $\epsilon$ and any $p > 2$.  The reason is that  \eqref{aR} implies  $\|\nabla e^{-t\Delta} \|_{p\to p} \le \frac{C}{\sqrt{t}}$ 
 (for all $t >0$) but  this latter estimate is not always true for $p > 2$. See \cite{CometxOuhabaz} for more details. If however the manifold satisfies the following gradient estimate 
 $$ | \nabla e^{-t \Delta} f |^2 \le c_\theta\,  e^{-\theta t\Delta} |\nabla f|^2, \quad t >0$$
 for some constants $\theta > 0$ and  $c_\theta > 0$ (possibly large), then we obtain 
  \eqref{aR} for all $p \in [2, \infty)$. This  extends one of the main results in \cite{CoulhonCPAM}. 
 As in \cite{CoulhonCPAM}, 
  the proof appeals to an inequality due to P.A. Meyer \cite{Meyer}, \cite{Meyer2}. We give in an Appendix a short and analytic proof of this inequality in the setting of manifolds which satisfy the above gradient estimate. More precisely we prove that the functional 
$$
 S (f) = \left(  \int_0^\infty e^{-t \alpha \Delta} | \nabla e^{-t\beta \Delta} f |^2\, dt \right)^{1/2}
$$
 is bounded on $L^p(M)$ for all $p \in [2, \infty)$. In Section \ref{sec3}  we give  additional comments on the gradient estimate and discuss how it can be obtained from some integrability of the Ricci curvature of the manifold.  

 \medskip
 Let $ \vec{\Delta} = {\bf d} {\bf d}^* + {\bf d}^* {\bf d}$ be the Hodge-de Rham Laplacian on $1$-differential forms. Here ${\bf d}$ denotes the exterior derivative and ${\bf d}^*$ its formal adjoint. Then $\vec{\Delta}$ is a self-adjoint operator on the $L^2$-space of $1$-forms. We consider the question of boundedness on $L^p$ (i.e., from $L^p$ of forms into $L^p$ of functions) of the Littlewood-Paley-Stein functional
 $$ \cev{H} (\omega) = \left( \int_0^\infty | {\bf d}^* e^{-t\vec{\Delta}} \omega |^2 \, dt \right)^{1/2}.$$
By the standard commutation property  $e^{-t \Delta} {\bf d}^* \omega = {\bf d}^* e^{-t\vec{\Delta}} \omega$ and since
$ ( {\bf d} e^{-t \Delta} )^*= e^{-t \Delta} {\bf d}^*$ 
one can see $\cev{H}$ as a "dual" of the usual Littlewood-Paley-Stein functional  on functions
$$ H (f) = \left( \int_0^\infty | {\bf d} e^{-t \Delta} f |^2 \, dt \right)^{1/2}.$$
  While $H$ is always bounded on $L^p(M)$ for all $p \in (1, 2]$ (cf. \cite{CDL-Studia} or Section \ref{sec2} below), the boundedness of $\cev{H}$ on $L^{p'}$ of $1$-forms is not clear for general manifolds and this question deserves to be studied in details. One of the reasons is its direct  link to the boundedness of the Riesz transform. Indeed, we prove that the boundedness of  $\cev{H}$ is equivalent to the boundedness of the Riesz transform on the dual space. More precisely, we prove the following equivalences for every  $p \in (1, \infty)$ (with conjugate  $p'$).  
\begin{center}
{$a-$ The Riesz transform ${\bf d} \Delta^{-\frac{1}{2}}$ is bounded on $L^p$\\
$\Updownarrow$ \\
$b-$ $ \cev{H}$ is bounded on $L^{p'}$ of $1$-differential forms.\\

\medskip
$c-$ The reverse Riesz inequality $\| \sqrt{\Delta}\, f \|_{p'} \lesssim \| \nabla f \|_{p'}$\\
$\Updownarrow$ \\
$d-$ $\cev{H}$ is bounded on $L^{p'}$ of exact forms (i.e., $\omega = {\bf d} f$).}
\end{center}
From this one deduces in particular boundedness or unboundedness of the functional $ \cev{H}$  on $L^q$ in different 
 settings.

\section{Littlewood-Paley-Stein functionals in an abstract setting}\label{sec2}

Throughout this section, $(X,\mu)$ is a $\sigma$-finite measure space. We consider a non-negative self-adjoint operator $\cL : D(\cL) \subseteq L^2(X) \to L^2(X)$ such that its associated semigroup $(e^{-t\cL})_{t\ge 0}$ is sub-Markovian. In particular, it is a contraction semigroup on $L^p(X)$ for 
$p \in [1, \infty)$. We keep the same notation as on $L^2(X)$ to  denote by $-\cL$ the corresponding generator on $L^p(X)$. 

Next, we consider a linear operator $\Gamma$ acting  on $D(\sqrt{\cL})$ and such that for every $f \in D(\sqrt{\cL})$ and $a.e.\,  x \in X$,  $\Gamma f(x)$ takes values in some  Hilbert space $K_x$. The norm $| . |_x$ of $K_x$ depends smoothly on $x$.  The notation $| \Gamma u |$ stands for 
$| \Gamma u(x) |_x$ at the point $x \in X$. Further, we assume that $\cL$ and $\Gamma$ satisfy the following  properties,
\begin{equation}\label{eq21}
\| \Gamma u \|_2 \le \delta_2 \| \cL^{1/2} u \|_2, \, \, \ u \in D(\cL^{1/2})
\end{equation} 
for some constant  $\delta_2 > 0$.  The $L^2$-norm on the LHS is $\left( \int_X | \Gamma u(x) |_x^2 d\mu(x) \right)^{1/2}$. 
For $p \in (1, 2]$ and   $0 \le f  \in D(\cL) \cap L^\infty(X)$, we assume that $\cL(f^p)$ is a function and the pointwise inequality 
\begin{equation}\label{eq22}
\cL(f^p)(x) \le p f^{p-1}(x) \cL (f)(x) - \alpha_p f^{p-2}(x) | \Gamma (f)(x) |_x^2  \, \, \ a.e.\,  x \in X
\end{equation}
holds for some  $\alpha_p > 0$.  The quantity $f^{p-2} | \Gamma (f) |^2$ is taken to be zero on the set where $f = 0$. We also assume that for $0 \le f  \in D(\cL) \cap L^\infty(X)$,
\begin{equation}\label{eq22-bis}
\int_X \cL(f^p) \, d\mu \ge 0.
\end{equation}
Finally, we also need  the following very natural property
\begin{equation}\label{eq30}
\left[ f_n \to  f\,  a.e.  \ {\rm and} \ \Gamma f_n \to  g \, a.e.  \right]  \Rightarrow g = \Gamma f.
\end{equation}

\smallskip 
Let us observe that for $f \in D(\cL)$ we have $f \in D(\sqrt{\cL})$ and hence $|\Gamma (f)| \in L^2$ by \eqref{eq21}. Therefore, the term on the RHS in \eqref{eq22} is a well defined function. Concerning the  term on the LHS, we note that the function $\lambda \mapsto \lambda^p$ is (up to a constant) a normal contraction on $[0,a]$ for a given $a > 0$ and hence $f^p \in D(\sqrt{\cL}) \cap L^\infty$ for $ f  \in D(\cL) \cap L^\infty$. See \cite{Fukushima}, Theorem 1.5.2  or the proof of Theorem 2.25 in \cite{Ouh05}. In particular,  $\cL(f^p)$ is defined at least as an element of the dual space $(D(\sqrt{\cL}))'$ (see e.g.  \cite{Ouh05}, Theorem 1.55) but we assume here that it is a function.  
Concerning \eqref{eq22-bis}, formally $\int_X \cL(f^p) \, d\mu  = \int_X f^p \cL (1) \, d\mu \ge 0$ because $\cL (1) \ge 0$ by the semi-Markovian assumption on the semigroup.  However in order to make this calculation correct, one needs $f^p$ to belong to the domain of the generator of the semigroup in $L^1$. Assumption \eqref{eq22-bis} is satisfied for many concrete examples.

We shall apply \eqref{eq22}  and \eqref{eq22-bis} with $u(t) = e^{-t\cL} f$ for $t > 0$ and $0 \le f \in L^2(X) \cap L^\infty(X)$. In this case, $u(t) \ge 0$ and $0 \le u(t)  \in D(\cL) \cap L^\infty$ by positivity of the semigroup, its  analyticity on $L^2$ and the fact that it is a contraction on $L^\infty$. 

We illustrate the above assumptions by the following examples. Let $X = M$ be a  Riemannian manifold and $\cL = \Delta + V$ a Schr\"odinger operator with a non-negative potential  
$ V \in L_{loc}^1(M)$. Set $\Gamma = \nabla$ and  $K_x = T_xM$, the tangent space at $x$. Assumption \eqref{eq21} follows readily from the fact that 
$$ \| \nabla u \|_2^2 \le  \| \nabla u \|_2^2  + \int_M V | f |^2 = \| \cL^{1/2} u \|_2^2.$$
 On the other hand,
\begin{eqnarray*}
\cL(f^p) &=& \Delta(f^p) + V f^p\\
&=& p f^{p-1} \Delta f - p(p-1) f^{p-2} |\nabla f |^2 + Vf^p\\
&=& p f^{p-1} \cL(f) - p(p-1) f^{p-2} |\nabla f |^2 - (p-1) f^{p-2} V f^2\\
&\le & p f^{p-1} \cL(f) - p(p-1) f^{p-2} |\nabla f |^2.
\end{eqnarray*}
This shows that \eqref{eq22} is satisfied for $\alpha_p = p(p-1)$. Assumption \eqref{eq22-bis}  follows from 
$\int_M f^{p-1} \Delta f - (p-1) f^{p-2} |\nabla f |^2  \ge 0$ which can be seen immediately from  
Theorem 3.9 in \cite{Ouh05}.  If we chose $\Gamma = \sqrt{V}$ (i.e., multiplication by $\sqrt{V}$), then the previous calculations give
\begin{eqnarray*}
\cL(f^p) &=& p f^{p-1} \cL(f) - p(p-1) f^{p-2} |\nabla f |^2 - (p-1) f^{p-2} V f^2\\
&\le & p f^{p-1} \cL(f) - (p-1) f^{p-2} |\sqrt{V} f |^2
\end{eqnarray*}
and hence \eqref{eq22} is satisfied for $\alpha_p = p-1$.  \\
Let  $\cL = -{\rm div}(A(x) \nabla)$ be an elliptic operator on $\RR^d$ or a domain and subject to Dirichlet or Neumann boundary conditions. We assume that the coefficients 
are real-valued.  One proves  easily that \eqref{eq22} holds with $\alpha_p = \eta (p-1)$ where $\eta >0$ is the ellipticity constant. The sub-Markovian property \eqref{eq22-bis} can be checked  by the same arguments as previously.  

\smallskip
The following theorem was proved by E.M. Stein for the Euclidean Laplacian (see also \cite{CDL-Studia} for Laplace-Beltrami operator on a  complete Riemannian manifold). The proof given below is known and our main reason to reproduce it here is to convince the reader that the  $L^p$-estimate of the Littlewood-Paley-Stein functional is independent of any kind of dimension.
\begin{theorem}\label{th21} Let $\cL$ be as above and assume \eqref{eq21},  \eqref{eq22}, \eqref{eq22-bis} and \eqref{eq30}. Define 
$$H_\Gamma (f) = \left( \int_0^\infty | \Gamma e^{-t\cL} f |^2 dt \right)^{1/2}.$$
Then  $H_\Gamma$ is bounded on $L^p(X)$ for all $p \in (1,2]$. In addition, there exists a constant $c_p$, depending only on $p$,  such that
\begin{equation}\label{eq23}
\| H_\Gamma (f)  \|_p \le \frac{c_p}{\sqrt{\alpha_p}} \| f \|_p.
\end{equation}
\end{theorem}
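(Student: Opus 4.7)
The natural strategy is Stein's classical argument for the vertical square function, adapted to the abstract carré-du-champ inequality \eqref{eq22}. I first reduce to $0\le f\in L^p\cap L^2\cap L^\infty$ (general $f$ is recovered at the end via $|\Gamma e^{-t\cL}f|\le|\Gamma e^{-t\cL}f_+|+|\Gamma e^{-t\cL}f_-|$, Minkowski, and density). For such $f$, set $u(t,x):=(e^{-t\cL}f)(x)\ge0$, which by sub-Markovianity and analyticity on $L^2$ lies in $D(\cL)\cap L^\infty$ for each $t>0$, so \eqref{eq22} applies.

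The core computation is the energy identity. Combining $\partial_t u^p=-pu^{p-1}\cL u$ with \eqref{eq22} yields the pointwise inequality
$$\alpha_p\,u^{p-2}|\Gamma u|^2\ \le\ -\partial_t(u^p)-\cL(u^p).$$
Integrating in $x$ and invoking \eqref{eq22-bis} to discard $\int_X\cL(u^p)\,d\mu\ge0$ gives a monotone decay:
$$\alpha_p\int_X u^{p-2}|\Gamma u|^2\,d\mu\ \le\ -\frac{d}{dt}\int_X u^p\,d\mu.$$
Integrating in $t$ from $\epsilon$ to $T$, using strong continuity of $e^{-t\cL}$ on $L^p$ as $\epsilon\to0^+$, and dropping the nonnegative term $\int_X u^p(T)\,d\mu$ as $T\to\infty$, one obtains the global energy estimate
$$\alpha_p\int_0^\infty\!\int_X u^{p-2}|\Gamma u|^2\,d\mu\,dt\ \le\ \|f\|_p^p.\qquad(\star)$$

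The second ingredient is Stein's maximal theorem for sub-Markovian semigroups: the maximal function $f^*(x):=\sup_{t\ge0}u(t,x)$ satisfies $\|f^*\|_p\le c_p\|f\|_p$ for $p>1$. Since $2-p\ge0$ and $0\le u\le f^*$, one has the pointwise bound
$$H_\Gamma(f)^2\ =\ \int_0^\infty u^{2-p}\cdot u^{p-2}|\Gamma u|^2\,dt\ \le\ (f^*)^{2-p}\,G(x),\qquad G(x):=\int_0^\infty u^{p-2}|\Gamma u|^2\,dt.$$
Then Hölder's inequality with exponents $\tfrac{2}{2-p}$ and $\tfrac{2}{p}$ gives
$$\|H_\Gamma f\|_p^p\ \le\ \int_X (f^*)^{p(2-p)/2}\,G^{p/2}\,d\mu\ \le\ \|f^*\|_p^{p(2-p)/2}\Big(\!\int_X G\,d\mu\Big)^{p/2}.$$
Plugging in $(\star)$ and the maximal estimate, the powers of $\|f\|_p$ combine to $\|f\|_p^p$ and the constant becomes $c_p\alpha_p^{-1/2}$, which is \eqref{eq23}. (The case $p=2$ is immediate from $(\star)$ alone, since $u^{p-2}\equiv1$.)

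\textbf{Main obstacles.} The chief analytic subtlety is justifying that \eqref{eq22} can be applied with $u=e^{-t\cL}f$ for a.e.\ $t>0$ and integrated against $d\mu\,dt$; this rests on the $L^2$-analyticity of the semigroup together with its $L^\infty$-contractivity, plus the convention that $u^{p-2}|\Gamma u|^2=0$ on $\{u=0\}$ (harmless because $u>0$ whenever $f\not\equiv0$ by the sub-Markovian positivity improving one usually has, but in any case the RHS of \eqref{eq22} is still well-defined a.e.). The convergence $\int_X u^p(\epsilon)\,d\mu\to\|f\|_p^p$ uses strong continuity on $L^p$; the $T\to\infty$ term is simply dropped since it is nonnegative. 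The one nontrivial external input is Stein's maximal theorem for sub-Markov semigroups, which is what forces the restriction $p>1$ and is the reason the constant in \eqref{eq23} depends on $p$ beyond the factor $1/\sqrt{\alpha_p}$. Property \eqref{eq30} is not used here; it will be needed later when passing from $e^{-t\cL}$ to more general functions $F(t\cL)$.
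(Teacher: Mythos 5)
Your proof is correct and follows essentially the same route as the paper: the pointwise inequality $(\partial_t+\cL)u^p\le-\alpha_p u^{p-2}|\Gamma u|^2$, integration using \eqref{eq22-bis}, the pointwise factorization $H_\Gamma(f)^2\le (f^*)^{2-p}\int_0^\infty u^{p-2}|\Gamma u|^2\,dt$, Hölder with exponents $\tfrac2{2-p}$ and $\tfrac2p$, and Cowling/Stein's maximal theorem. One small correction: \eqref{eq30} \emph{is} needed here, precisely in your final density step, to identify the a.e.\ limit of $\Gamma e^{-t\cL}f_n$ with $\Gamma e^{-t\cL}f$ when extending \eqref{eq23} from $L^p\cap L^2\cap L^\infty$ to all of $L^p$.
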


\begin{proof}
 By writing $f = f^+ - f^-$ we see that we may assume without loss of generality that $f$ is non negative. Let $0 \le f \in L^1(X) \cap L^2(X)$ and set 
 $u(t) = e^{-t\cL} f$. We shall need $u(t) \to 0$ (in $L^2(X)$) as $t \to \infty$ but this is not true in the case where  $0$ is an eigenvalue of $\cL$. A way to get around this is to replace $\cL$ by $\cL + \epsilon$ for $\epsilon > 0$. It is clear that \eqref{eq21} and \eqref{eq22} hold with the same constants $\delta_2$ and $\alpha_p$ for small $\epsilon > 0$. We might first  prove the theorem for $\cL + \epsilon$ and then let $\epsilon \to 0$. 
 
 By \eqref{eq22},
\begin{equation*}
(\frac{\partial}{\partial t} + \cL) u^p = p u^{p-1}(-\cL u)  + \cL (u^p)
\le -\alpha_p u^{p-2} | \Gamma u |^2.
\end{equation*}
In particular, $(\frac{\partial}{\partial t} + \cL) u^p \le 0$ and  hence 
\begin{eqnarray*}
\int_0^\infty | \Gamma e^{-t\cL}f |^2 dt &\le& \frac{-1}{\alpha_p} \int_0^\infty u(t)^{2-p} (\frac{\partial}{\partial t} + \cL) u(t)^p dt\\
&\le& \frac{1}{\alpha_p} \left( \sup_{t > 0}  e^{-t\cL} f  \right)^{2-p}  \int_0^\infty -(\frac{\partial}{\partial t} + \cL) u^p dt. 
\end{eqnarray*}
Next, we integrate over $X$ and use H\"older's inequality,
\begin{eqnarray*}
\int_X | H_\Gamma (f) |^p d\mu &\le& \frac{1}{(\alpha_p)^{p/2}} \int_X  \left( \sup_{t > 0}  e^{-t\cL} f  \right)^{(2-p)\frac{p}{2}} 
 \left( -\int_0^\infty (\frac{\partial}{\partial t} + \cL) u^p dt \right)^{p/2}\\
 &\le& \frac{1}{(\alpha_p)^{p/2}} \|  \sup_{t > 0}  e^{-t\cL} f \|_p^{(2-p)\frac{p}{2}} \left( -\int_X  \int_0^\infty (\frac{\partial}{\partial t} + \cL) u^p dt \right)^{p/2}\\
 &=& \frac{1}{(\alpha_p)^{p/2}} \|  \sup_{t > 0}  e^{-t\cL} f \|_p^{(2-p)\frac{p}{2}} \left( \int_X  u^p(0) - \int_0^\infty \int_X \cL(u^p) d\mu  dt \right)^{p/2}.
\end{eqnarray*}
Now we use the sub-Markovian property \eqref{eq22-bis} to end up with  
\begin{equation}\label{eq24}
\| H_\Gamma (f) \|_p \le \frac{1}{\sqrt{\alpha_p}}\, \| \sup_{t > 0}  e^{-t\cL} f \|_p^{\frac{(2-p)}{2}} \| f \|_p^{p/2}.
\end{equation}
It is a classical fact  that the maximal operator $M(f) =  \sup_{t > 0}  | e^{-t\cL} f | $ is bounded on $L^p(X)$ for all $p \in (1,\infty)$. Its norm can be estimated by a constant $\kappa_p$ depending only on $p$, see e.g. \cite{Cowling}, Theorem 7. We  insert this  in \eqref{eq24} to obtain  
\begin{equation}\label{eq24-bis}
\| H_\Gamma (f) \|_p \le \frac{1}{\sqrt{\alpha_p}}\, \| f \|_p
\end{equation}
for all $f \in L^1 \cap L^\infty$. This inequality extends to all $f \in L^p$ by a density argument. Indeed, take a sequence  $f_n \in L^1 \cap L^\infty$ which converges in $L^p$ to $f$. Then by \eqref{eq24-bis}, the sequence $\Gamma e^{-t\cL} f_n$ converges to some $g(t,.)$ in $L^p(X, L^2(0, \infty))$. Next, for any $N > 0$ and $\varphi \in L^{p'}$, it follows immediately from the Cauchy-Schwartz inequality (in $t$) and H\"older's inequality  (for $d\mu$) that 
$\Gamma e^{-t\cL} f_n$ converges to $g(t,.)$ in $L^1(X\times (0,N), d\mu \times dt)$. After extracting a sequence which converges for $a.e.\, (x,t)$ we obtain from assumption \eqref{eq30} that $g(t,.) = \Gamma e^{-t\cL} f (.)$. This gives  \eqref{eq24-bis} for $f\in L^p$. 
\end{proof}

In the rest of this section we borrow a result from the recent article  \cite{CometxOuhabaz} which shows that boundedness of the Littlewood-Paley-Stein functional can be characterized in terms of ${\mathcal R}$-boundedness of a family of operators. 

\begin{definition}\label{def-Rad}
A subset ${\mathcal T}$ of bounded operators on $L^p(X)$ is said  ${\mathcal R}$-bounded (Rademacher bounded) if there exists a constant $C > 0$ such that 
for every collection $T_1,.., T_n \in {\mathcal T}$ and every $f_1,..., f_n \in L^p(X)$
\begin{equation}\label{eq25}
\EE \left\| \sum_{k=1}^n {\gothic r}_k T_k f_k \right\|_p \le C\, \EE \left\| \sum_{k=1}^n {\gothic r}_k f_k \right\|_p.
\end{equation}
Here, $({\gothic r}_k)_k$ is a sequence of independent Rademacher variables and $\EE$ is the usual expectation.
The smallest constant $C > 0$ is called the ${\mathcal R}$-bound of ${\mathcal T}$.
\end{definition}
  By the Kahane-Khintchine inequality, this definition can be reformulated as follows 
\begin{equation}\label{eq26}
\left\|  \left( \sum_{k=1}^n | T_k f_k|^2 \right)^{1/2}  \right\|_p \le C\,  \left\| \left( \sum_{k=1}^n |f_k|^2 \right)^{1/2} \right\|_p.
\end{equation}
The notion of ${\mathcal R}$-bounded family of operators plays a very important role in many questions in functional analysis (see  \cite{HytonenII}) as well as in the theory of maximal regularity for evolution equations (see \cite{Weis} or \cite{KW}). The following theorem was proved in \cite{CometxOuhabaz} in the case of Schr\"odinger operators on manifolds. The proof  remains valid in the general setting we consider here. Some related results were proved  in \cite{LeMerdy} and \cite{Haak} in the context  of admissible operators in control theory. 
\begin{theorem}\label{thm-Rad} Let  $\Gamma $ and $\cL$ satisfy the assumptions of Theorem \ref{th21}. Let  $p \in (1,\infty)$. Then $H_\Gamma$ is bounded on $L^p(X)$ if and only if the set $\{ \sqrt{t}\, \Gamma e^{-t\cL}, t > 0 \}$ is ${\mathcal R}$-bounded on $L^p(X)$. \\
In particular, $\{ \sqrt{t}\, \Gamma e^{-t\cL}, t > 0 \}$ is ${\mathcal R}$-bounded on $L^p(X)$ for all $p \in (1,2]$. The ${\mathcal R}$-bound on $L^p(X)$ depends only on $p$. 
\end{theorem}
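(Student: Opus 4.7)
The plan is to exploit the Kahane--Khintchine reformulation \eqref{eq26}, which recasts $\mathcal{R}$-boundedness as a vector-valued square-function inequality, and to bridge this discrete form with the continuous integral defining $H_\Gamma$ via a dyadic partition of the time axis together with the analyticity of the sub-Markovian semigroup $(e^{-t\cL})$. Once the equivalence between boundedness of $H_\Gamma$ and $\mathcal{R}$-boundedness of $\{\sqrt{t}\,\Gamma e^{-t\cL}:t>0\}$ is established, the ``in particular'' assertion is immediate from Theorem~\ref{th21}.

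For the implication $H_\Gamma$ bounded $\Rightarrow \mathcal{R}$-boundedness, I would first establish the $\mathcal{R}$-bound on the dyadic subfamily $\{\sqrt{2^j}\,\Gamma e^{-2^j\cL}:j\in\ZZ\}$. Given $f_j\in L^p(X)$, the intervals $I_j=[2^j,2^{j+1})$ are disjoint with $|I_j|\sim 2^j$, and the analyticity of $t\mapsto\Gamma e^{-t\cL}f_j$ allows comparing $\sqrt{2^j}\,\Gamma e^{-2^j\cL}f_j$ with the $L^2(I_j;dt)$-content of $t\mapsto\Gamma e^{-t\cL}f_j$, yielding
\begin{equation*}
\sum_j |\sqrt{2^j}\,\Gamma e^{-2^j\cL}f_j|^2 \;\lesssim\; \sum_j \int_{I_j}|\Gamma e^{-t\cL}f_j|^2\,dt \;\le\; \sum_j |H_\Gamma f_j|^2.
\end{equation*}
Taking $L^p$-norms, the right-hand side is controlled by $\|(\sum_j|f_j|^2)^{1/2}\|_p$ through the diagonal $\ell^2$-valued extension of $H_\Gamma$, itself obtained from the $\ell^2$-extension of the genuinely linear operator $f\mapsto(t\mapsto\Gamma e^{-t\cL}f)\in L^p(X;L^2(0,\infty))$ by Marcinkiewicz--Zygmund. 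The full family is recovered from the dyadic one by writing, for $t\in[2^j,2^{j+1})$, $\sqrt{t}\,\Gamma e^{-t\cL}=\sqrt{t/2^j}\,\sqrt{2^j}\Gamma e^{-2^j\cL}\,e^{-(t-2^j)\cL}$ and invoking the $\mathcal{R}$-boundedness of the contraction semigroup $\{e^{-r\cL}:r\ge 0\}$ on $L^p$ for sub-Markovian semigroups, together with the closure of $\mathcal{R}$-bounded families under products.

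For the converse, the change of variables $t=e^s$ turns the integral into $\int_\RR|\sqrt{e^s}\,\Gamma e^{-e^s\cL}f|^2\,ds$, which I would discretise on a uniform $s$-grid of step $1/N$; analyticity (and the bound $\|s\cL e^{-s\cL}\|_{L^p\to L^p}\lesssim 1$) makes this Riemann approximation uniform in $f$. A direct application of \eqref{eq26} with all $f_k\equiv N^{-1/2}f$ would produce the divergent sum $\sum_k N^{-1}|f|^2$; to avoid this I would insert a Calder\'on reproducing formula $f=c\int_0^\infty(\tau\cL)e^{-\tau\cL}f\,d\tau/\tau$ before discretising. Each sample then rewrites as $\sqrt{t_{k,N}}\,\Gamma e^{-t_{k,N}\cL}g_k$ with $g_k$ coming from a spectral Littlewood--Paley-type decomposition of $f$, and \eqref{eq26} delivers $\|H_\Gamma f\|_p\lesssim\|f\|_p$ provided $\|(\sum_k|g_k|^2)^{1/2}\|_p\lesssim\|f\|_p$.

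The main obstacle is precisely this last inequality for the spectral decomposition $g_k=\phi_k(\cL)f$ of $\cL$ itself. For sub-Markovian semigroups it is available through Stein's $H^\infty$-calculus (Cowling--McConnell), the same circle of results underlying the maximal inequality invoked in the proof of Theorem~\ref{th21}; verifying that this spectral decomposition interacts correctly with $\Gamma$ inside the samples, and that the off-diagonal tails produced by the reproducing formula are tame (via Schur-type estimates coming from analyticity), is where the bookkeeping becomes delicate but does not introduce any new conceptual ingredient.
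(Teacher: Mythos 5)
Your converse direction is essentially sound and is, in discrete disguise, the paper's argument: the insertion of a Calder\'on reproducing formula before sampling corresponds to the paper's integration by parts in $t$, which yields $\sqrt{I}\le 2\bigl(\int_0^\infty|\sqrt{t}\,\Gamma e^{-\frac{t}{2}\cL}\,[(t\cL)e^{-\frac{t}{2}\cL}f]|^2\,\frac{dt}{t}\bigr)^{1/2}$; the continuous analogue of \eqref{eq26} (Lemma \ref{lem1-0}) then pulls out $\sqrt{t}\,\Gamma e^{-\frac{t}{2}\cL}$, and the square function estimate from the bounded $H^\infty$ calculus of $\cL$ is exactly your inequality $\|(\sum_k|g_k|^2)^{1/2}\|_p\lesssim\|f\|_p$. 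Working with the continuous parameter and Lemma \ref{lem1-0} spares you the Riemann-sum and off-diagonal bookkeeping entirely; you would also need, as the paper does, to replace $\cL$ by $\cL+\nu$ to kill the boundary term when $0$ is an eigenvalue.

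The forward direction, however, has a genuine gap at its first step. The claimed pointwise inequality $\sum_j|\sqrt{2^j}\,\Gamma e^{-2^j\cL}f_j|^2\lesssim\sum_j\int_{I_j}|\Gamma e^{-t\cL}f_j|^2\,dt$ does not follow from analyticity: at a fixed $x$ there is no mean-value principle bounding the value of $t\mapsto|\Gamma e^{-t\cL}f_j(x)|$ at an endpoint of $I_j$ by its $L^2(I_j,dt)$ average (a one-dimensional real interval is not a disc, and the constant in any such comparison degenerates even for polynomials). What the semigroup property actually gives is the identity $2^{j}|\Gamma e^{-2^{j+1}\cL}f_j|^2=\int_{I_j}|\Gamma e^{-t\cL}(e^{-(2^{j+1}-t)\cL}f_j)|^2\,dt$, in which the input depends on $t$; to control the resulting $\int_0^\infty|\Gamma e^{-t\cL}u(t)|^2\,dt$ by $\|(\int|u(t)|^2dt)^{1/2}\|_p$ you need precisely the square-function form (Lemma \ref{lem1-0}) of the $\mathcal{R}$-boundedness of $\{\Gamma e^{-t\cL}\}$ — i.e.\ the conclusion you are trying to prove — and the diagonal Marcinkiewicz--Zygmund extension of $H_\Gamma$ does not apply because each term is no longer $H_\Gamma$ acting on a single fixed $f_j$. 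The paper circumvents this with a different device: it writes $I=\EE|\sum_k{\gothic r}_k\sqrt{t_k}\,\Gamma e^{-t_k\cL}f_k|^2=-\int_0^\infty\frac{d}{dt}\,\EE|\Gamma e^{-t\cL}\sum_k{\gothic r}_k\sqrt{t_k}e^{-t_k\cL}f_k|^2\,dt$, uses the independence of the Rademacher variables to redistribute the weights $\sqrt{t_k}\cdot\sqrt{t_k}=t_k$ between the two factors produced by the derivative, and bounds the result by $H_\Gamma$ applied to the Rademacher sums $\sum_k{\gothic r}_ke^{-t_k\cL}f_k$ and $\sum_k{\gothic r}_k(t_k\cL)e^{-t_k\cL}f_k$, finishing with the Kahane inequality and the $\mathcal{R}$-boundedness of $(e^{-t\cL})_t$ and $(t\cL e^{-t\cL})_t$ coming from analyticity. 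You would need to replace your dyadic comparison by an argument of this type (or otherwise break the circularity) for the forward implication to stand.
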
 
\begin{proof} Let $p \in (1,\infty)$ and 
suppose that  $H_\Gamma$ is bounded on $L^p(X)$.  We prove that the set $\{ \sqrt{t}\, \Gamma e^{-t\cL}, t > 0 \}$ is ${\mathcal R}$-bounded on $L^p(X)$. Let $t_k \in (0, \infty)$ and $f_k  \in L^p(X)$ for $k = 1,...,N$.  Set 
$ I := \EE \left|  \sum_k  {\gothic r}_k \sqrt{t_k}\, \Gamma\,  e^{-t_k \cL} f_k \right|^2$. We denote by $\Gamma(f)(x).\Gamma(g)(x)$ the scalar product of $\Gamma(f)(x)$ and $\Gamma(g)(x)$ in $K_x$. 

If $0$ is an eigenvalue of $\cL$ and $P$ denotes the projection onto $\ker(\cL)$,  then as a consequence of \eqref{eq21},  $\Gamma(P u) = 0$ for all $u \in L^2(X)$. Since $e^{-t\cL}$ converges strongly to $P$ on $L^2(X)$ it then follows that   
$$ I = - \int_0^\infty \frac{d}{dt} \EE | \Gamma e^{-t\cL} \sum_k  {\gothic r}_k \sqrt{t_k} e^{-t_k \cL} f_k |^2\, dt.$$
Using (twice) the independence of the Rademacher variables we have 
\begin{eqnarray*}
I &=& - \int_0^\infty \frac{d}{dt} \EE | \Gamma e^{-t\cL} \sum_k  {\gothic r}_k \sqrt{t_k} e^{-t_k \cL} f_k |^2\, dt\\
&=& 2 \int_0^\infty \EE \left[ (\Gamma e^{-t\cL} \sum_k  {\gothic r}_k \sqrt{t_k}  e^{-t_k \cL} f_k) \cdot (\Gamma e^{-t\cL}\sum_k  {\gothic r}_k \sqrt{t_k} \cL e^{-t_k \cL} f_k)\right] \, dt\\
&=& 2  \int_0^\infty \EE   \sum_k  \Gamma e^{-t\cL} {\gothic r}_k \sqrt{t_k} e^{-t_k \cL} f_k \cdot  \Gamma e^{-t\cL}   {\gothic r}_k \sqrt{t_k} \cL e^{-t_k \cL} f_k \, dt\\
&=& 2 \int_0^\infty \EE  \sum_k  \Gamma e^{-t\cL} {\gothic r}_k e^{-t_k \cL} f_k \cdot  \Gamma e^{-t\cL}   {\gothic r}_k (t_k \cL) e^{-t_k \cL} f_k \, dt\\
&=& 2 \int_0^\infty \EE \left[ (  \Gamma e^{-t\cL} \sum_k {\gothic r}_k e^{-t_k \cL} f_k) \cdot  (\Gamma e^{-t\cL}  \sum_k {\gothic r}_k (t_k \cL) e^{-t_k \cL} f_k )\right] \, dt.
\end{eqnarray*}
Next, by the Cauchy-Schwarz inequality,
\begin{eqnarray*}
I &\le& 2 \int_0^\infty \left( \EE | \Gamma e^{-t\cL} \sum_k {\gothic r}_k e^{-t_k \cL} f_k |^2 \right)^{1/2} \left( \EE | \Gamma e^{-t\cL} \sum_k {\gothic r}_k (t_k \cL) e^{-t_k \cL} f_k |^2 \right)^{1/2}\, dt\\
&\le& \int_0^\infty  \EE | \Gamma e^{-t\cL} \sum_k {\gothic r}_k e^{-t_k \cL} f_k |^2  \, dt + \int_0^\infty  \EE | \Gamma e^{-t\cL} \sum_k {\gothic r}_k (t_k \cL) e^{-t_k \cL} f_k |^2  \, dt.
\end{eqnarray*}
Therefore,
\begin{equation}\label{eq27}
I \le \EE \left[ \left(H_\Gamma ( \sum_k {\gothic r}_k e^{-t_k \cL} f_k)\right)^2 \right]  + \EE \left[ \left(H_\Gamma ( \sum_k {\gothic r}_k (t_k \cL) e^{-t_k \cL} f_k) \right)^2 \right].
\end{equation}
Since 
\[
 \EE \left[ \left(H_\Gamma ( \sum_k {\gothic r}_k e^{-t_k \cL} f_k)\right)^2 \right] = \EE \left\| \sum_k {\gothic r}_k \Gamma e^{-t\cL} e^{-t_k \cL} f_k \right\|^2_{L^2((0,\infty), dt)},
 \]
we have by the Kahane inequality,
\begin{equation}\label{eq28}
c_p \sqrt{I} \le  \left| \EE \left[ \left(H_\Gamma ( \sum_k {\gothic r}_k e^{-t_k \cL} f_k)\right)^p \right] \right|^{1/p}  +  \left| \EE \left[ \left(H_\Gamma ( \sum_k {\gothic r}_k (t_k \cL) e^{-t_k \cL} f_k) \right)^p \right] \right|^{1/p}
\end{equation}
for some constant $c_p > 0$ depending only on $p$. Now we use the assumption that $H_\Gamma$ is bounded on $L^p(X)$ and obtain 
\begin{eqnarray*}
\left\| \sqrt{I} \right\|_p &\le&  C \left( \left| \EE \left\| \sum_k {\gothic r}_k e^{-t_k \cL} f_k \right\|_p^p \right|^{1/p}  + 
 \left| \EE \left\| \sum_k {\gothic r}_k (t_k \cL) e^{-t_k \cL} f_k \right\|_p^p \right|^{1/p} \right)\\
 &\le& C' \left( \EE \left\| \sum_k {\gothic r}_k e^{-t_k \cL} f_k \right\|_p + 
 \EE \left\| \sum_k {\gothic r}_k (t_k \cL) e^{-t_k \cL} f_k \right\|_p \right)
\end{eqnarray*}
where we used again the Kahane inequality.  On the other hand, it is easy to see by the Kahane inequality that 
$\| \sqrt{I} \|_p$ is equivalent to $\EE \left\|  \sum_k  {\gothic r}_k \sqrt{t_k}\,  \Gamma\, e^{-t_k \cL} f_k \right\|_p$.  The constant $C'$ in the previous estimate follows from the norm of $H_\Gamma$ and constants from the Kahane-Khintchine inequality, so it depends only on $p$. 

Since the semigroup $(e^{-tL})$ is sub-Markovian, it is holomorphic on $L^p(X)$ for all $p \in (1,\infty)$ and is a contraction for complex time  (see \cite{Ouh05}, Theorem 3.13). It follows from this, the Cauchy formula and \cite{Weis}, Section 4. d. that $(e^{-t\cL})_{t > 0} $ and $ (t\cL e^{-t\cL})_{t>0} $ are ${\mathcal R}$-bounded on $L^p(M)$ with an ${\mathcal R}$-bound depending only on $p$ (see also \cite{KW} or \cite{HytonenII} Theorem~10.3.4). 
This  and the previous estimates give
\[
\EE \left\|   \sum_k  {\gothic r}_k \sqrt{t_k}\,  \Gamma\,  e^{-t_k \cL} f_k \right\|_p \le C\, \EE \left\|  \sum_k  {\gothic r}_k  f_k \right\|_p
\]
with a constant $C$ independent of $t_k$ and $f_k$. This proves that $\{ \sqrt{t}\, \Gamma e^{-t\cL}, t > 0 \}$ is ${\mathcal R}$-bounded on $L^p(X)$ and the ${\mathcal R}$-bound depends only on $p$ and does not involve any sort of dimension. 

For the converse, we assume that $\{ \sqrt{t}\, \Gamma e^{-t\cL}, t > 0 \}$ is ${\mathcal R}$-bounded on $L^p(X)$. Let $\delta > \frac{1}{2}$.
 Set $\cL_\epsilon = \cL + \epsilon $ and $I_\epsilon(f) = \int_0^\infty | \Gamma e^{-t \cL_\epsilon} f |^2\, dt$. Integration by parts and the Cauchy-Schwarz inequality give
 \begin{eqnarray*}
 I_\epsilon(f) &=& \left[  t | \Gamma e^{-t \cL_\epsilon} f |^2 \right]_0^\infty + 2 \int_0^\infty  \Gamma (t \cL_\epsilon) e^{-t \cL_\epsilon} f. \Gamma e^{-t \cL_\epsilon} f\, dt\\
 &\le & 2 \left( \int_0^\infty | \Gamma (t\cL_\epsilon) e^{-t \cL_\epsilon} f |^2\, dt \right)^{1/2} \sqrt{I_\epsilon}.
 \end{eqnarray*}
 This implies
 $$ \sqrt{I_\epsilon(f)}  \le 2 \left( \int_0^\infty | \sqrt{t} \Gamma e^{-\frac{t}{2} \cL}  (t\cL_\epsilon) e^{-\frac{t}{2}\cL_\epsilon} f |^2\, \frac{dt}{t} \right)^{1/2}.$$
 By Lemma \ref{lem1-0} below and the ${\mathcal R}$-boundedness assumption, there exists a constant $C > 0$ depending only on $p$ such that 
 \begin{equation}\label{eq29}
 \| \sqrt{I_\epsilon(f)} \|_p \le C \left\| \left( \int_0^\infty | (t \cL_\epsilon) e^{-\frac{t}{2} \cL_\epsilon} f |^2\, \frac{dt}{t} \right)^{1/2} \right\|_p.
 \end{equation}
 Now, since $e^{-t \cL_\epsilon}$ is sub-Markovian, the operator $\cL_\epsilon$ has a bounded holomorphic functional calculus with a bound depending only on $p$ (cf. \cite{Cowling} or \cite{Carbonaro}). Since $F(\cL_\epsilon) = F_\epsilon(\cL)$ with $F_\epsilon (z) = F(z + \epsilon)$ we see that that the bounds of the holomorphic functional calculus for $\cL_\epsilon$  are independent of $\epsilon$. From this one deduces that  $\cL_\epsilon$ has square function estimates (cf. \cite{CDMY}, Section 6  or \cite{LeMerdy}) with constant independent of $\epsilon$. This and \eqref{eq29} imply
  $$
 \| \sqrt{I_\epsilon(f)} \|_p \le C \| f \|_p $$
 for all $\epsilon > 0$. Then, a  simple  application of Fatou's lemma shows that $H_\Gamma$ is bounded on $L^p(X)$. 
 
 Finally, the assertion concerning $p \in (1,2]$ in the theorem follows from the previous equivalence and Theorem \ref{th21}.
 \end{proof}
 
 \begin{remark} Let $F$ be a bounded holomorphic function on some sector of $\CC^+$ (of angle $w >0$) such that the function $z \mapsto z F(z)$ is also bounded on the same sector. Consider the functionals 
  $H_\Gamma^F$ and $H_\Gamma^{F'}$ as in \eqref{eq32} below. The previous proof shows that if these two functionals are bounded on $L^p$ for some given $p \in (1,\infty)$,  then the set $\{ \sqrt{t}\, \Gamma F(t\cL), \, t > 0 \}$ is ${\mathcal R}$-bounded on $L^p$. 
  \end{remark}
\begin{lemma}\label{lem1-0}
Let $I$ be an interval of $\RR$ and suppose that for each $t \in I$, $S_t$ is a bounded operator on $L^p(X)$.  Then the set $\{ S_t, \ t \in I \}$ is 
${\mathcal R}$-bounded on $L^p(X)$ {\it if and only if} there exists a constant $C > 0$ such that
$$\left\| \left( \int_I  | S_t u(t) |^2 \, dt  \right)^{1/2} \right\|_p \le C\, \left\| \left( \int_I  | u(t) |^2 \, dt  \right)^{1/2} \right\|_p$$
for all $u \in L^p(X, L^2(I))$. The best constant $C$ is equivalent to the  ${\mathcal R}$-bound of $(S_t)_t$. 
\end{lemma}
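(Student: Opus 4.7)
The plan is to treat this lemma as the integral analogue of the Kahane--Khintchin reformulation \eqref{eq26} of $\mathcal{R}$-boundedness, and prove both directions by passing between finite sums and integrals through step-function approximations in $L^p(X,L^2(I))$. Throughout I will use that $t \mapsto S_t f$ is strongly continuous (or at least Bochner measurable) in $L^p(X)$, which is the case in the concrete applications of the lemma in this paper, where $S_t = \sqrt{t}\,\Gamma F(t\cL)$ is built from a holomorphic semigroup.

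\textbf{From $\mathcal{R}$-boundedness to the integral inequality.} I fix a step function $u(t,x) = \sum_{k=1}^n f_k(x)\,\mathbf{1}_{A_k}(t)$ where the $A_k$ are pairwise disjoint subsets of $I$ of finite measure. Refining each $A_k$ into small pieces $A_{kj}$ and picking $t_{kj} \in A_{kj}$, and using that $S_t u(t) = S_t f_k$ for $t \in A_k$, strong continuity of $t \mapsto S_t f_k$ gives
\begin{equation*}
\int_I |S_t u(t)|^2\, dt = \sum_k \int_{A_k} |S_t f_k|^2\, dt \approx \sum_{k,j} |A_{kj}|\,|S_{t_{kj}} f_k|^2 = \sum_{k,j} \bigl|S_{t_{kj}}(\sqrt{|A_{kj}|}\,f_k)\bigr|^2.
\end{equation*}
Applying \eqref{eq26} to the finite family $\{S_{t_{kj}}\}_{k,j}$ with inputs $\sqrt{|A_{kj}|}\,f_k$, taking the $L^p(X)$-norm, and noting that $\sum_{k,j} |A_{kj}|\,|f_k|^2 = \sum_k |A_k|\,|f_k|^2 = \int_I |u(t)|^2\, dt$ yields the desired estimate on step functions with the same constant as in \eqref{eq26}. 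Refining the partition and then invoking density of step functions in $L^p(X, L^2(I))$ extends the inequality to arbitrary $u$.

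\textbf{From the integral inequality to $\mathcal{R}$-boundedness.} Given $t_1,\dots,t_n \in I$ and $f_1,\dots,f_n \in L^p(X)$, I choose pairwise disjoint intervals $A_k \subset I$ with $t_k \in A_k$ and common (small) measure $h$, and set
\begin{equation*}
u_h(t) := \sum_k \frac{1}{\sqrt{h}}\, f_k\, \mathbf{1}_{A_k}(t).
\end{equation*}
Then pointwise in $x$, $\int_I |u_h(t)|^2\, dt = \sum_k |f_k|^2$, while by Lebesgue differentiation applied to each $t \mapsto |S_t f_k|^2$,
\begin{equation*}
\int_I |S_t u_h(t)|^2\, dt = \sum_k \frac{1}{h}\int_{A_k} |S_t f_k|^2\, dt \;\longrightarrow\; \sum_k |S_{t_k} f_k|^2
\end{equation*}
as $h \to 0$. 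Applying the hypothesized inequality to $u_h$ and passing to the limit via Fatou's lemma produces \eqref{eq26}, and hence $\mathcal{R}$-boundedness by Kahane--Khintchin.

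\textbf{Main obstacle.} The delicate step is the interchange between discrete sums and the continuous integral in $t$. Strong continuity of $t \mapsto S_t f$, which holds in all applications of the lemma in this paper thanks to the holomorphic semigroup structure, makes the Riemann-sum approximation clean in the forward direction; absent that, Bochner measurability together with Lebesgue differentiation along the chosen partition suffices, possibly after first reducing to a.e. choices of $t_k$. Finally, tracking constants through both implications shows that the optimal $C$ and the $\mathcal{R}$-bound agree up to the universal Kahane--Khintchin factors depending only on $p$.
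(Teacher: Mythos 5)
Your argument is correct and is essentially the standard proof of this equivalence (step-function approximation in $L^p(X,L^2(I))$ plus the Kahane--Khintchine reformulation \eqref{eq26}), which is exactly what the paper relies on: it gives no proof of its own but refers to \cite{Weis} (4.a), where the lemma is established by the same discretization. The only point worth making fully explicit is the strong measurability/continuity of $t \mapsto S_t f$ needed both for the Riemann-sum step and for applying Lebesgue differentiation at the prescribed points $t_k$ (whose exceptional set would otherwise depend on $x$), together with the closability argument identifying the limit of $S_t u_n(t)$ with $S_t u(t)$ in the final density step; you correctly note that the semigroup structure supplies this in every application made in the paper.
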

This lemma is stated in  \cite{Weis} (see 4.a) in the case where  $S_t: L^p(X) \to L^p(X)$.   In the proof of the previous theorem we took $S_t$ to be $ \Gamma e^{-t\cL}$. In this case, $\Gamma e^{-t\cL} f(x) \in K_x$ (again $| \Gamma e^{-t\cL}f(x) |$ is actually $| \Gamma e^{-t\cL}f(x) |_x$). This small change of the context does not affect the statement and proof in  \cite{Weis}. \\
 

We also recall the following simple but useful result from \cite{CometxOuhabaz}.
\begin{proposition}\label{r-bb-resol}
Let $\delta > \frac{1}{2}$ and $p \in (1, \infty)$.  Then  the following assertions are equivalent.\\
i) The set $\{\sqrt{t}\,  \Gamma\, e^{-t \cL}, \ t > 0  \}$ is ${\mathcal R}$-bounded on $L^p(X)$,\\
ii) the set $\{\sqrt{t}\,  \Gamma\, (1+ t \cL)^{-\delta}, \ t > 0  \}$ is ${\mathcal R}$-bounded on $L^p(X)$.\\
The ${\mathcal R}$-bounds in $i)$ and $ii)$ are equivalent.
\end{proposition}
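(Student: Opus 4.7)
The plan is to deduce each direction from an algebraic or integral representation of one family in terms of the other, then apply two standard stability properties of $\mathcal{R}$-bounded sets: closure under integral averaging against an $L^1$ weight, and closure under composition of two families indexed by the same parameter.

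\textbf{From $i)$ to $ii)$.} I start from the subordination identity
\[
(1+t\cL)^{-\delta} \;=\; \frac{1}{\Gamma(\delta)} \int_0^\infty s^{\delta-1} e^{-s}\, e^{-st\cL}\, ds,
\]
valid by spectral calculus for every $\delta > 0$ (here $\Gamma(\delta)$ denotes the Euler Gamma function). Multiplying by $\sqrt{t}\,\Gamma$ and writing $\sqrt{t} = \sqrt{st}/\sqrt{s}$ under the integral yields
\[
\sqrt{t}\,\Gamma\, (1+t\cL)^{-\delta} \;=\; \frac{1}{\Gamma(\delta)} \int_0^\infty s^{\delta - \frac{3}{2}} e^{-s} \bigl[\sqrt{st}\,\Gamma\, e^{-st\cL}\bigr]\, ds.
\]
Every bracketed operator belongs to the $\mathcal{R}$-bounded family of $i)$, so for each $t$ the operator on the left is an average, against a fixed weight in $s$, of members of that family. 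By the standard fact that integral averages against an $L^1$ weight of an $\mathcal{R}$-bounded family form an $\mathcal{R}$-bounded set with bound at most the $L^1$ norm of the weight times the original bound (cf.\ \cite{KW}, \cite{HytonenII}), assertion $ii)$ follows as soon as $\int_0^\infty s^{\delta-3/2}e^{-s}\,ds < \infty$, i.e.\ precisely when $\delta > 1/2$.

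\textbf{From $ii)$ to $i)$.} I factor, for each $t > 0$,
\[
\sqrt{t}\,\Gamma\, e^{-t\cL} \;=\; \bigl[\sqrt{t}\,\Gamma\, (1+t\cL)^{-\delta}\bigr] \circ \bigl[(1+t\cL)^\delta\, e^{-t\cL}\bigr].
\]
The second factor is $F(t\cL)$ with $F(z) = (1+z)^\delta e^{-z}$, a bounded holomorphic function on every sector $\{|\arg z| < \omega\}$ with $\omega < \pi/2$. Because the semigroup is sub-Markovian, $\cL$ admits a bounded $H^\infty$ calculus on $L^p(X)$ with a constant depending only on $p$ (\cite{Cowling}), and a Cauchy-formula argument over a contour in such a sector---the same kind already used in the proof of Theorem~\ref{thm-Rad} to handle $(e^{-t\cL})_{t>0}$ and $(t\cL e^{-t\cL})_{t>0}$---gives $\mathcal{R}$-boundedness of $\{F(t\cL), t > 0\}$ on $L^p(X)$. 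The first factor is $\mathcal{R}$-bounded by $ii)$, and the product of two $\mathcal{R}$-bounded families sharing the same index parameter is $\mathcal{R}$-bounded. This yields $i)$, with $\mathcal{R}$-bound controlled by that of $ii)$ times a constant depending only on $p$ and $\delta$.

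The main obstacle I anticipate is the $\mathcal{R}$-boundedness of the auxiliary family $\{(1+t\cL)^\delta e^{-t\cL}\}_{t>0}$, which relies on combining the bounded $H^\infty$ calculus of $\cL$ with a contour-integral representation of $F$; once this is granted, everything else reduces to a routine application of the integral-averaging and composition stability properties of $\mathcal{R}$-boundedness.
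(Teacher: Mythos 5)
Your proof is correct and follows essentially the same route as the paper: the Laplace-transform/subordination identity plus $L^1$-averaging of the $\mathcal{R}$-bounded family (with $\delta>\tfrac12$ ensuring integrability of the weight near $s=0$) for $i)\Rightarrow ii)$, and the factorization through $\phi_t(z)=(1+tz)^{\delta}e^{-tz}$ combined with the $\mathcal{R}$-boundedness of uniformly bounded $H^\infty$ families for $ii)\Rightarrow i)$. The only point the paper adds is a technical caveat in the second direction: the $H^\infty$-to-$\mathcal{R}$-boundedness result requires $\cL$ to have dense range, which fails when $0$ is an eigenvalue, so one replaces $\cL$ by $\cL+\epsilon$ and lets $\epsilon\to 0$ using that the bounds are uniform in $\epsilon$.
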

\begin{proof}
We prove $i) \Rightarrow ii)$. By the Laplace transform
\begin{eqnarray*}
 \sqrt{t}\,  \Gamma\, (1+ t \cL)^{-\delta} &=& c_{\delta} \sqrt{t} \int_0^\infty s^{\delta-1} e^{-s} \Gamma\, e^{-st \cL} \,ds\\
 &=& c_{\delta} \int_0^\infty a_t(s) \sqrt{s}\,  \Gamma\, e^{-s\cL} \, ds
 \end{eqnarray*}
 with $a_t(s) := t^{\frac{1}{2}-\delta} s^{\delta- \frac{3}{2}} e^{-s/t}$. Since $\delta > \frac{1}{2}$,  $\int_0^\infty a_t(s) ds = c'_{\delta}$
 and we apply  Lemma 3.2 in  \cite{dePagter} to obtain $ii)$. \\
 For the converse, we define $\phi_t(z) := (1+ tz)^{\delta}e^{-tz}$. Then $(\phi_t)_t$ is uniformly bounded in  $H^\infty(\Sigma(\omega_p))$. Hence, $\{ \phi_t(\cL), \ t > 0 \}$ is 
 ${\mathcal R}$-bounded by \cite{KW} or \cite{HytonenII} Theorem~10.3.4. Taking the product of the ${\mathcal R}$-bounded operators $\sqrt{t}\, \Gamma\, (1+ t\cL)^{-\delta}$ and 
 $\phi_t(\cL)$ gives assertion $i)$. \\
In this argument if one applies \cite{HytonenII} Theorem~10.3.4. assertion 3), then one needs $\cL$ to have a dense range. This later property is not true if $0$ is an eigenvalue of $\cL$. In this case, we replace in the above argument $\phi_t(z)$ by $(1+ t(z+\epsilon))^{\delta}e^{-t(z+\epsilon)}$  (i.e., $\cL$ is replaced by $\cL + \epsilon$) and then let $\epsilon \to 0$ since the bounds are uniform in $\epsilon$.   
\end{proof}

\section{The "almost"  Riesz inequality}\label{sec3}

Let $\Gamma$ and $\cL$ be as in the previous section. Recall that we use the notation $\cL$ for (minus) the generator of the sub-Markovian semigroup $(e^{-t\cL})_{t\ge0}$ on $L^p(X)$, the dependence of $p$ here is implicit.  We  prove that for $p \in (1,2]$ we have the following  "almost" Riesz transform inequality.    

\begin{theorem}\label{thm31}
Suppose that $\Gamma$ and $\cL$ satisfy \eqref{eq21}, \eqref{eq22}, \eqref{eq22-bis}  and \eqref{eq30} and let $p_0, p_1, p \in (1, 2]$ such that $\frac{1}{p_0} + \frac{1}{p_1} = \frac{2}{p}$.  Then for any $\epsilon \in (0, \frac{1}{2}]$, there exists a constant $C_\epsilon = C_\epsilon(p, p_0, p_1)$ such that
\begin{equation}\label{eq31}
\| \Gamma f \|_p^2 \le C_\epsilon \| \cL^{\frac{1}{2} + \epsilon} f \|_{p_0}\| \cL^{\frac{1}{2} - \epsilon} f \|_{p_1}
\end{equation}
for all $f \in D(\cL^{\frac{1}{2} + \epsilon}) \cap D(\cL^{\frac{1}{2} - \epsilon})$.
\end{theorem}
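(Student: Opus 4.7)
My approach is to derive a pointwise Stein-type identity that decomposes $|\Gamma f|^2$ into a $t$-integral whose two factors split naturally into $u := \cL^{1/2+\epsilon}f$ and $v := \cL^{1/2-\epsilon}f$, and then to bound each resulting Littlewood--Paley-Stein functional on $L^{p_i}$.

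After the standard reduction $\cL \mapsto \cL + \eta$ (to ensure $|\Gamma e^{-t\cL} f|^2$ vanishes at infinity, as in the proof of Theorem \ref{th21}), the fact that $\Gamma$ vanishes on $\ker(\cL^{1/2})$ by \eqref{eq21} implies that $t \mapsto |\Gamma(1-e^{-t\cL})f|^2$ equals $0$ at $t=0$ and tends to $|\Gamma f|^2$ as $t\to\infty$; differentiating then yields
\[
|\Gamma f|^2 \;=\; 2\int_0^\infty \Gamma(1-e^{-t\cL})f \cdot \Gamma \cL e^{-t\cL} f \, dt.
\]
The algebraic heart of the argument is that, by the spectral calculus,
\[
\Gamma(1-e^{-t\cL})f \;=\; t^{1/2+\epsilon}\,\Gamma \tilde H_1(t\cL) u, \qquad \Gamma \cL e^{-t\cL} f \;=\; t^{-1/2-\epsilon}\,\Gamma \tilde H_2(t\cL) v,
\]
with the bounded holomorphic symbols $\tilde H_1(z) := z^{-1/2-\epsilon}(1-e^{-z})$ and $\tilde H_2(z) := z^{1/2+\epsilon}e^{-z}$; the $t$-powers cancel in the product. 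Setting $H_F(w) := (\int_0^\infty |\Gamma F(t\cL) w|^2 dt)^{1/2}$, Cauchy--Schwarz in $t$ combined with H\"older in $L^p$ (using $\tfrac{2}{p} = \tfrac{1}{p_0}+\tfrac{1}{p_1}$) then gives
\[
\|\Gamma f\|_p^2 \;\le\; 2\,\|H_{\tilde H_1}(u)\|_{p_0}\,\|H_{\tilde H_2}(v)\|_{p_1}.
\]

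The remaining task is to bound $H_{\tilde H_i}$ on $L^{p_i}$ for $p_i \in (1,2]$. Both $\tilde H_i$ are bounded holomorphic on a sector of $\CC^+$, vanish at $0$, and decay at infinity ($\tilde H_1$ like $z^{-1/2-\epsilon}$, $\tilde H_2$ exponentially). Fix $\delta \in (1/2,\,1/2+\epsilon)$ and factorize $F(t\cL) = (1+t\cL)^{-\delta}\cdot\tilde F(t\cL)$ with $\tilde F(z) := (1+z)^\delta F(z)$; by Proposition \ref{r-bb-resol} the family $\{\sqrt{t}\,\Gamma(1+t\cL)^{-\delta}\}_{t > 0}$ is $\mathcal R$-bounded on $L^{p_i}$, so Lemma \ref{lem1-0} yields
\[
\|H_F(w)\|_{p_i} \;\le\; C\,\Big\|\Big(\int_0^\infty |\tilde F(t\cL) w|^2 \tfrac{dt}{t}\Big)^{1/2}\Big\|_{p_i}.
\]
With this choice of $\delta$, $\tilde F \in H^\infty\cap\Psi(\Sigma_\omega)$ for both $F\in\{\tilde H_1, \tilde H_2\}$ (decaying like $z^{\delta-1/2-\epsilon}$ at $\infty$ for $\tilde H_1$, exponentially for $\tilde H_2$, and vanishing like $z^{1/2-\epsilon}$, resp.\ $z^{1/2+\epsilon}$, at $0$), so the bounded $H^\infty$ calculus of $\cL$ on $L^{p_i}$ (\cite{Cowling, Carbonaro}) combined with the McIntosh-type quadratic estimate (\cite{CDMY, LeMerdy}) bounds the right-hand side by $C_\epsilon\|w\|_{p_i}$. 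Combining the inequalities gives \eqref{eq31}.

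The main obstacle is the last step: bounding $H_{\tilde H_1}$ on $L^{p_i}$. The decay $z^{-1/2-\epsilon}$ of $\tilde H_1$ at infinity is the slowest one compatible with the threshold $\delta > 1/2$ of Proposition \ref{r-bb-resol}, and this is precisely how the hypothesis $\epsilon > 0$ enters essentially: taking $\epsilon = 0$ would force $\delta = 1/2$ and leave $\tilde F$ bounded but non-vanishing at $\infty$, breaking the $\Psi$-condition and causing the square function to diverge, in agreement with the $L^2$ obstruction pointed out in the introduction.
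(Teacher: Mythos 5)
Your argument follows essentially the same route as the paper: the same Stein identity \eqref{eq34}, the same splitting of the integrand into $\varphi(t\cL)\cL^{1/2+\epsilon}f$ and $\psi(t\cL)\cL^{1/2-\epsilon}f$ with $\varphi(z)=z^{-1/2-\epsilon}(1-e^{-z})$ and $\psi(z)=z^{1/2+\epsilon}e^{-z}$ (your $\tilde H_1,\tilde H_2$), Cauchy--Schwarz in $t$ followed by H\"older with $\frac1{p_0}+\frac1{p_1}=\frac2p$, and then the boundedness of the generalized Littlewood--Paley--Stein functionals via the ${\mathcal R}$-boundedness of $\{\sqrt t\,\Gamma(1+t\cL)^{-\delta}\}$ and the square function estimate coming from the $H^\infty$ calculus. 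Your last step is in fact an inlined proof of the paper's Proposition~\ref{pro33} in the case where the symbol vanishes at the origin, so the core of the proposal is correct.

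Two points are not covered as written. First, the endpoint $\epsilon=\tfrac12$, which the theorem includes: there $\tilde H_1(z)=\frac{1-e^{-z}}{z}$ tends to $1$ as $z\to0$, so $\tilde F(z)=(1+z)^{\delta}\tilde H_1(z)$ does not vanish at the origin, $\int_0^\infty|\tilde F(t\lambda)|^2\frac{dt}{t}$ diverges near $t=0$, and the McIntosh quadratic estimate you invoke fails. One must either treat $\epsilon=\tfrac12$ with the simpler identity $\frac12|\Gamma f|^2=\int_0^\infty \Gamma e^{-t\cL}f\cdot\Gamma\cL e^{-t\cL}f\,dt$ (reducing to two applications of Theorem~\ref{th21}), which is what the paper does, or use the full strength of Proposition~\ref{pro33}, whose hypothesis on $F'$ near $0$ is handled by an integration by parts in $t$ replacing $F$ by $zF'(z)$. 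Second, the domain bookkeeping: your manipulations implicitly require $f=\cL^{-1/2-\epsilon}u$ and $f\in D(\cL^{1/2})$, i.e.\ they are literally valid only on a core such as $D(\cL)\cap D(\cL_{p_0}^{1/2+\epsilon})\cap D(\cL_{p_1}^{1/2-\epsilon})$; passing to all $f\in D(\cL^{1/2+\epsilon})\cap D(\cL^{1/2-\epsilon})$ requires regularizing by $e^{-s\cL}$, commuting it with the fractional powers, and letting $s\to0$ using \eqref{eq30}, a step the paper carries out and you omit. Both are repairable, but they should be addressed for the proof to cover the full statement.
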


\begin{remark}\label{rem32}
1- We can of course take $p_0= p_1 = p$ and obtain 
$$\| \Gamma f \|_p^2 \le C_\epsilon \| \cL^{\frac{1}{2} + \epsilon} f \|_{p}\| \cL^{\frac{1}{2} - \epsilon} f \|_{p}.$$
The  boundedness of the Riesz transform type operator $\Gamma {\mathcal L}^{-1/2}$ on $L^p$ is exactly the case $\epsilon = 0$. \\
2- As we already mentioned in the introduction, for the  Laplacian on a complete Riemannian manifold, the above result was proved  in \cite{CDL-Studia} (see also \cite{CoulhonCPAM}) in  the particular case $\epsilon = \frac{1}{2}$ which reads as (take $p_0 = p_1 = p$) 
$$ \| \nabla f \|_p^2 \le C \| \Delta f \|_{p}\| f \|_{p}.$$
2- In the case where $\Gamma = \nabla$, $\cL= \Delta$ on a Riemannian manifold  $M$, the estimate \eqref{eq31} is dimension-free. \\
3- We make precise the meaning of \eqref{eq31}. If we denote by $-\cL_r$ the  generator of the semigroup $(e^{-t\cL})$ on $L^r(X)$, then \eqref{eq31} holds for all $ f \in D(\cL_{p_0}^{\frac{1}{2} + \epsilon}) \cap D(\cL_{p_1}^{\frac{1}{2} - \epsilon})$. 
\end{remark} 

Theorem \ref{thm31} applies to Schr\"odinger operators $\cL = \Delta + V$ with non-negative potential $V \in L^1_{loc}(M)$ and $M$ is  any complete Riemannian manifold. It applies with $\Gamma = \nabla$ as well as with $\Gamma = \sqrt{V}$. This gives for $p \in (1, 2]$,
$$ \| \nabla f \|_p^2  + \| \sqrt{V} f \|_p ^2 \le C_\epsilon \| \cL^{\frac{1}{2} + \epsilon} f \|_{p}\| \cL^{\frac{1}{2} - \epsilon} f \|_{p}.$$
The theorem also applies to   $\cL = -{\rm div}(A(x) \nabla)$  an elliptic operator on $\RR^d$ or on any domain of $\RR^d$ and subject to Dirichlet or Neumann boundary conditions. We assume the coefficients to be real-valued but not necessarily bounded. If the coefficients are real and bounded then the corresponding heat kernel has a Gaussian bound (a little of regularity of the domain is needed if the operator is subject to Neumann boundary conditions). In this case, the Riesz transform is bounded on $L^p$ for $p \in (1,2]$. Here, for \eqref{eq31} we do not need boundedness of the coefficients or  regularity of the domain.
\medskip 

The proof of the previous theorem is based on the next proposition which shows the boundedness of a Littlewood-Paley-Stein functional associated with a general function $F$,
\begin{equation}\label{eq32}
H_\Gamma^F (f) = \left( \int_0^\infty | \Gamma F(t\cL) f |^2\, dt \right)^{1/2}.
\end{equation}
In the case of manifolds a more general version involving a family of function $(F_n)_n$ is proved in \cite{CometxOuhabaz}. The arguments there can be extended to the  setting considered here.

Recall the classical Littlewood-Paley-Stein functional
$$H_\Gamma (f) = \left( \int_0^\infty | \Gamma e^{-t\cL} f |^2\, dt \right)^{1/2}$$
with $\Gamma = \nabla$ in the setting of Riemannian manifolds. We consider this functional in an abstract setting and with a more  general family of operators $F(t\cL)$ at the place of  the semigroup 
$(e^{-t\cL})$.   For a given $w \in (0, \pi)$, we denote by $\Sigma(w)$ the open sector
$\{ z \not = 0, \ | arg(z) | < w \}$. By $H^\infty(w)$ we denote the space of bounded holomorphic functions on $\Sigma(w)$. 

\begin{proposition}\label{pro33} Suppose that $\Gamma$ and $\cL$ satisfy \eqref{eq21}, \eqref{eq22}, \eqref{eq22-bis} and \eqref{eq30}. Given $p \in (1,2]$ and let $w_p \in (\arcsin | \frac{2}{p} -1|, \pi)$. Let $F \in H^\infty(w_p)$ such that 
\begin{itemize}
\item there exists $\delta > \frac{1}{2}$ such that $ |F(z) | \le \frac{C}{|z|^\delta}$ as $|z| \to \infty,\  z \in \Sigma(w_p)$, 
\item there exists $\epsilon > 0$ such that $| F'(z) | \le \frac{C}{|z|^{1-\epsilon}}$ as $z \to 0, \ z \in \Sigma(w_p).$  
\end{itemize}
Then $H_\Gamma^F$ is bounded on $L^p(X)$, i.e., there exists a constant $C_F = C_F(\epsilon, p, \delta)$ such that
$$ \| H_\Gamma^F(f) \|_p \le C_F \| f \|_p$$
for all $f \in L^p(X)$. If $X = M$ is a complete Riemannian manifold, then $C_F$ is independent of the dimension.\\
If $p > 2$, then the same result holds provided  $H_\Gamma$ is bounded on $L^p(X)$. 
\end{proposition}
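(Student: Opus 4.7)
My strategy is to reduce the boundedness of $H_\Gamma^F$ to the $\mathcal{R}$-boundedness of the resolvent-type family $\{\sqrt{t}\,\Gamma(1+t\cL)^{-\delta'}:t>0\}$ (provided by Theorem~\ref{thm-Rad} and Proposition~\ref{r-bb-resol}) together with a scalar $H^\infty$ square function estimate for $\cL$.

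First I would perform an integration by parts in $t$, pointwise in $x\in X$, using $\frac{d}{dt} F(t\cL) = \cL F'(t\cL)$. This yields
\[
\int_0^\infty |\Gamma F(t\cL) f|^2\, dt = \Bigl[ t\,|\Gamma F(t\cL)f|^2\Bigr]_0^\infty - 2\int_0^\infty \Gamma F(t\cL)f \cdot \Gamma \phi(t\cL)f\, dt,
\]
with $\phi(z):=zF'(z)$. The boundary terms vanish: at infinity because $|F(z)|\le C|z|^{-\delta}$ with $\delta>\tfrac12$, and near $0$ because $F$ is bounded there. (A density argument, first on a dense class of $f$ where $L^2$-theory applies and then transferring to $L^p$ as in the proof of Theorem~\ref{th21}, makes this rigorous.) The Cauchy--Schwarz inequality in $t$ then gives the pointwise bound
\[
H_\Gamma^F(f)(x) \le 2\left(\int_0^\infty |\Gamma \phi(t\cL)f(x)|^2\, dt\right)^{1/2}.
\]

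Next I would pick any auxiliary $\delta'\in(\tfrac12,\delta)$ and split
\[
\sqrt{t}\,\Gamma \phi(t\cL) = \bigl[\sqrt{t}\,\Gamma(1+t\cL)^{-\delta'}\bigr]\,\psi(t\cL),\qquad \psi(z):=(1+z)^{\delta'}zF'(z).
\]
The hypotheses on $F$, together with a Cauchy estimate for $F'$ at infinity on a slightly smaller subsector, give $|\psi(z)|\le C|z|^{\epsilon}$ near $0$ and $|\psi(z)|\le C|z|^{\delta'-\delta}$ near $\infty$, so $\psi\in H^\infty(\Sigma(w_p))$ vanishes at both endpoints of $(0,\infty)$. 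By Theorem~\ref{thm-Rad} combined with Proposition~\ref{r-bb-resol}, the family $\{\sqrt{t}\,\Gamma(1+t\cL)^{-\delta'}:t>0\}$ is $\mathcal{R}$-bounded on $L^p$ for $p\in(1,2]$ (and for $p>2$ under the additional hypothesis that $H_\Gamma$ is bounded on $L^p$). Applying Lemma~\ref{lem1-0} after the substitution $s=\log t$ then produces
\[
\|H_\Gamma^F(f)\|_p \le C\left\|\left(\int_0^\infty |\psi(t\cL)f|^2\,\frac{dt}{t}\right)^{1/2}\right\|_p.
\]

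The last step invokes the bounded $H^\infty$ functional calculus of $\cL$ on $L^p$ (a general feature of generators of sub-Markovian semigroups, with constant depending only on $p$): the decay of $\psi$ at $0$ and $\infty$ produces the scalar square function estimate $\|(\int_0^\infty |\psi(t\cL)f|^2\,dt/t)^{1/2}\|_p\le C\|f\|_p$ (see \cite{CDMY} or \cite{LeMerdy}), which closes the argument. The main delicate point is the existence of an admissible $\delta'$: it must satisfy $\delta'>\tfrac12$ to invoke Proposition~\ref{r-bb-resol} and $\delta'<\delta$ so that $\psi$ genuinely decays at infinity (otherwise $(1+z)^{\delta}\phi(z)$ tends merely to a constant and the scalar square function estimate fails). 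This is precisely where the hypothesis $\delta>\tfrac12$ is essential. Dimension-freeness in the Riemannian setting is inherited automatically, since none of the constants involved (the $\mathcal{R}$-bound, the $H^\infty$-calculus bound, the square function constant) carries any dependence on the dimension.
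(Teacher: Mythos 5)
Your argument is essentially the paper's own proof, merely reorganized: the paper first treats the case where $F$ decays at $0$ via the factorization through $\sqrt{t}\,\Gamma(1+t\cL)^{-\delta'}$, Lemma~\ref{lem1-0} and the $H^\infty$ square function estimate, and then reduces the general case to it by the same integration by parts producing $zF'(z)$. The only point where the paper is more careful is the vanishing of the boundary term $t|\Gamma F(t\cL)f|^2$ as $t\to\infty$, which is delicate when $0$ lies in the spectrum of $\cL$; the paper handles this by first proving the estimate for $\cL_\nu=\cL+\nu$ (uniformly in $\nu$) and then letting $\nu\to 0$ via Fatou, a step you should incorporate rather than dismiss with a density remark.
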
 
\begin{proof} Let $ F \in H^\infty(w_p)$ having a decay $ |F(z) | \le \frac{C}{|z|^\delta}$ at infinity and suppose first that 
$|F(z)| \le C | z |^{\epsilon}$ for some $\epsilon > 0$. Set $G(z) = (1+z)^{\delta'} F(z)$ for some $\delta' \in (\frac{1}{2}, \delta)$. Then 
$G(z)$ has a decay $C |z|^\epsilon$ at $0$ and $\frac{C}{|z|^{\delta -\delta'}}$ at $\infty$. By Theorem \ref{thm-Rad} and Proposition \ref{r-bb-resol} the set $\{ \sqrt{t} (1+t\cL)^{-\delta'}, \ t > 0 \}$ is ${\mathcal R}$-bounded on $L^p(X)$. 
Now we write
$$H_\Gamma^F (f) =  \left( \int_0^\infty | \sqrt{t} \Gamma (1+ t\cL)^{-\delta'} G(t\cL) f |^2\, \frac{dt}{t} \right)^{1/2}$$
and apply Lemma \ref{lem1-0} to obtain
$$ \| H_\Gamma^F(f) \|_p \le C \left\| \left( \int_0^\infty | G(t\cL) f |^2\, \frac{dt}{t} \right)^{1/2} \right\|_p.$$
The constant $C$ depends on $p$ and $\delta'$. The term on the right hand side is a square function of the operator $\cL$. Since $\cL$ has a bounded holomorphic functional calculus on $L^p(X)$ on the sector $\Sigma(w_p)$ (cf. \cite{Carbonaro}), it follows that $\cL$ has a square function estimate (cf. \cite{CDMY}, Section 6). This means that the latest term is bounded by $C' \| f \|_p$ and we obtain the claim of the proposition.

Suppose now that $F$ does not have decay at $0$ but  $| F'(z) | \le \frac{C}{|z|^{1-\epsilon}}$. We wish to make an integration by parts in the definition of $H_\Gamma^F$  but there are some problems in the case where $0$ is an eigenvalue of $\cL$. In order to get around this we first prove the result for $\cL_\nu = \cL + \nu I$ for $\nu > 0$ in place of $\cL$. We denote by $H_\Gamma^{F,\nu}$ the corresponding  Littlewood-Paley-Stein functional.\\
 It is not difficult to see that the derivative with respect to $t$ of $F(t\cL_\nu)f$ is $\cL F'(t\cL_\nu)f$ and $ t | \Gamma F(t\cL_\nu) f |^2 $ tends to  $0$ when $t \to \infty$ (see also \cite{CometxOuhabaz}, proof of Theorem 4.1).  Therefore, 
\begin{eqnarray*}
(H_\Gamma^{F,\nu}(f))^2 &=& \left[ t | \Gamma F(t\cL_\nu) f |^2 \right]_0^\infty - 2\int_0^\infty \Gamma (t\cL_\nu) F'(t\cL_\nu)f.\Gamma F(t\cL_\nu)f\, dt\\
&=& - 2\int_0^\infty \Gamma (t\cL_\nu) F'(t\cL_\nu)f.\Gamma F(t\cL_\nu)f\, dt\\
&\le& 2 \left( \int_0^\infty | \Gamma (t\cL_\nu) F'(t\cL_\nu)f |^2 \, dt\right)^{1/2} H_\Gamma^{F,\nu}(f).
\end{eqnarray*}
From this we obtain 
$$ H_\Gamma^{F,\nu}(f) \le 2 \left( \int_0^\infty | \Gamma (t\cL_\nu) F'(t\cL_\nu)f |^2 \, dt\right)^{1/2} = 2 H_\Gamma^{\tilde{G},\nu}(f)$$
with $\tilde{G}(z) = z F'(z)$. By the Cauchy formula, $G$ has the same decay as $F$ at $\infty$ and by our assumption it has a decay $C | z |^\epsilon$ at $0$. Then we argue exactly as in the previous case with $\tilde{G}$ in place of $F$. Since the semigroup $(e^{-t\cL_\nu})$ is sub-Markovian we obtain 
$$ \| H_\Gamma^{\tilde{G},\nu}(f) \|_p \le C' \| f \|_p$$
with some constant $C'$ independent of $\nu$ (and any kind of dimension). So
\begin{equation}\label{eq33}
\left\| \left( \int_0^\infty | \Gamma F(t(\cL + \nu)) f |^2\, dt \right)^{1/2} \right \|_p \le 2 C' \| f \|_p.
\end{equation}
For $f \in L^2(X) \cap L^p(X)$ one has $F(t(\cL + \nu))f \to F(t\cL)f$ in $L^2(X)$ as $\nu \to 0$ (at least for each fixed $t \ge 0$) and it follows from \eqref{eq21} that $\Gamma F(t(\cL+\nu))f$ converges to $\Gamma F(t\cL)f$. We apply Fatou's lemma in \eqref{eq33} and a simple density argument to obtain that $H_\Gamma^F$ is bounded on $L^p(X)$.
\end{proof}
\begin{proof}[Proof of Theorem \ref{thm31}]
For $f \in D(\cL^{1/2})$  we have
\begin{equation}\label{eq34}
\frac{1}{2} | \Gamma f|^2 = \int_0^\infty \Gamma (1- e^{-t\cL}) f. \Gamma \cL e^{-t\cL} f\, dt.
\end{equation}
To see this, let us denote by $P$ the projection onto $\ker(\cL)$ if $0$ is an eigenvalue of $\cL$. Then $e^{-t\cL}f$ converges to $Pf$  as $t \to \infty$. As in the proof of Proposition \ref{pro33}, $\Gamma e^{-t \cL} f$  converges to 
 $\Gamma P(f)$, which is $0$ by \eqref{eq21}. Now, the right hand side of \eqref{eq34} is
 $$ \left[ \frac{1}{2} | \Gamma (1-e^{-t\cL})f |^2 \right]_0^\infty = \frac{1}{2} | \Gamma f |^2.$$
 In order to continue we denote, as  in Remark \ref{rem32}, by $-\cL_r$ the generator of $(e^{-t\cL})$ on $L^r(X)$ with again $\cL = \cL_2$. Let  $\epsilon \in (0, \frac{1}{2})$ and 
 consider the functions   $\varphi(z) = \frac{ 1- e^{-z}}{ z^{1/2 +\epsilon}}$ and $\psi(z) = z^{1/2 + \epsilon}e^{-z}$. Let 
 $f \in D(\cL) \cap D(\cL_{p_0}^{\frac{1}{2} + \epsilon}) \cap D(\cL_{p_1}^{\frac{1}{2} - \epsilon})$. We have by  \eqref{eq34}
 \begin{eqnarray*}
 \frac{1}{2} | \Gamma f|^2 &=& \int_0^\infty \Gamma (1- e^{-t\cL})(t\cL)^{-\frac{1}{2} - \epsilon} (\cL^{\frac{1}{2} + \epsilon}f). \Gamma 
 (t\cL)^{\frac{1}{2} + \epsilon} e^{-t\cL} (\cL^{\frac{1}{2} - \epsilon}f)\, dt\\
 &=& \int_0^\infty \Gamma \varphi(t\cL) (\cL^{\frac{1}{2} + \epsilon}f). \Gamma 
 \psi(t\cL) (\cL^{\frac{1}{2} - \epsilon}f)\, dt\\
 &\le& \left( \int_0^\infty | \Gamma \varphi(t\cL) (\cL^{\frac{1}{2} + \epsilon}f) |^2\, dt \right)^{1/2} \left( \int_0^\infty | \Gamma \psi(t\cL) (\cL^{\frac{1}{2} - \epsilon}f) |^2\, dt \right)^{1/2} \\
 &=& H_\Gamma^\varphi(\cL^{\frac{1}{2} + \epsilon}f) H_\Gamma^\psi(\cL^{\frac{1}{2} - \epsilon}f).
 \end{eqnarray*}
 The functions $\varphi$ and $\psi$ satisfy the assumptions of Proposition~\ref{pro33} and hence $H_\Gamma^\varphi$ and $H_\Gamma^\psi$ are bounded on $L^{p_0}$ and $L^{p_1}$ for $p_0, p_1 \in (1, 2]$.   Their norms are bounded by  constants depending only on $\varphi, \psi, p_0, p_1$ and $\epsilon$. From the previous calculations and a simple application of H\"older's inequality we obtain
 \begin{eqnarray*}
  \frac{1}{2}\| \Gamma f \|_p^p &\le&  \| H_\Gamma^\varphi(\cL^{\frac{1}{2} + \epsilon}f) \|_{p_0}^{\frac{p}{2}} 
  \| H_\Gamma^\psi(\cL^{\frac{1}{2} - \epsilon}f) \|_{p_1}^{\frac{p}{2}} \\
  &\le& C \| \cL^{\frac{1}{2} + \epsilon}f \|_{p_0}^{\frac{p}{2}} \| \cL^{\frac{1}{2} - \epsilon}f \|_{p_1}^{\frac{p}{2}}.
  \end{eqnarray*}
  This proves \eqref{eq31} for $\epsilon \in (0, \frac{1}{2})$ and $f \in D(\cL) \cap D(\cL_{p_0}^{\frac{1}{2} + \epsilon}) \cap D(\cL_{p_1}^{\frac{1}{2} - \epsilon})$. For $\epsilon = \frac{1}{2}$ we use the formula
  $$ \frac{1}{2} | \Gamma f|^2 = \int_0^\infty \Gamma e^{-t\cL} f. \Gamma \cL e^{-t\cL} f\, dt$$
  instead of \eqref{eq34} and argue as above. \\
  What remains  to do is to extend the estimate to all $f \in D(\cL_{p_0}^{\frac{1}{2} + \epsilon}) \cap D(\cL_{p_1}^{\frac{1}{2} - \epsilon})$. 
 We start with   $f \in L^2(X) \cap L^{p_0}(X) \cap L^{p_1}(X)$ and $t > 0$. By analyticity of the semigroup we have $e^{-t\cL} f \in 
  D(\cL) \cap D(\cL_{p_0}^{\frac{1}{2} + \epsilon}) \cap D(\cL_{p_1}^{\frac{1}{2} - \epsilon})$. Hence 
  \begin{equation}\label{eq36}
   \| \Gamma e^{-t\cL} f \|_p^2  \le C \| \cL_{p_0}^{\frac{1}{2} + \epsilon}e^{-t\cL}f \|_{p_0} \| \cL_{p_1}^{\frac{1}{2} - \epsilon}e^{-t\cL}f \|_{p_1}.
   \end{equation}
 This latter estimate extends easily to $f \in L^{p_0}(X) \cap L^{p_1}(X)$.  Indeed, for such $f$ we take a sequence 
 $(f_n) \in L^2(X) \cap L^{p_0}(X) \cap L^{p_1}(X)$ which converges to $f$ in $L^{p_0}(X)$ and $ L^{p_1}(X)$. Then $\Gamma e^{-t\cL}f_n$ is bounded in $L^p(X)$ and one extracts a sub-sequence which converges weakly in $L^p(X)$ and then use \eqref{eq30}. Finally, for $f \in D(\cL_{p_0}^{\frac{1}{2} + \epsilon}) \cap D(\cL_{p_1}^{\frac{1}{2} - \epsilon})$ we obtain immediately from   \eqref{eq36} (once we commute powers of $\cL$ and the semigroup)
 $$  \| \Gamma e^{-t\cL} f \|_p^2  \le C \| \cL_{p_0}^{\frac{1}{2} + \epsilon} f \|_{p_0} \| \cL_{p_1}^{\frac{1}{2} - \epsilon} f \|_{p_1}.$$
 Since the right hand side is independent of $t > 0$ we argue exactly as before and use \eqref{eq30} to let $t \to 0$. This finishes the proof of the theorem.
\end{proof}
 
 The constant $C$ depends on $\epsilon$ and this does not allow to let  $\epsilon \to 0$ to obtain the boundedness of the Riesz transform on $L^p(X)$. 
 
 \medskip
We mention the following corollary.
\begin{corollary}\label{cor33}
Suppose the assumptions of Theorem \ref{thm31}. Let $\alpha \in [0,\frac{1}{2})$. Then $\Gamma \cL^{-\alpha} e^{-\cL}$ is bounded on $L^p$ for all $p \in (1,2]$.
\end{corollary}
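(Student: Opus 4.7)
The plan is to apply Theorem~\ref{thm31} to $f = \cL^{-\alpha} e^{-\cL} g$ with $p_0 = p_1 = p$. Since $\alpha < \tfrac{1}{2}$, I would choose $\epsilon \in (0, \tfrac{1}{2} - \alpha)$, so that both $\tfrac{1}{2} + \epsilon - \alpha$ and $\tfrac{1}{2} - \epsilon - \alpha$ are strictly positive while $\epsilon$ remains in $(0, \tfrac{1}{2}]$. Theorem~\ref{thm31} then yields
$$
\| \Gamma \cL^{-\alpha} e^{-\cL} g \|_p^2
\le C_\epsilon \, \| \cL^{\frac{1}{2}+\epsilon-\alpha} e^{-\cL} g \|_p \, \| \cL^{\frac{1}{2}-\epsilon-\alpha} e^{-\cL} g \|_p,
$$
and the task reduces to bounding each factor on the right-hand side by $\|g\|_p$.

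For this I would invoke the bounded $H^\infty$ functional calculus of $\cL$ on $L^p(X)$, which is available for every $p \in (1,\infty)$ by the sub-Markovian property (cf.\ \cite{Carbonaro}, \cite{Cowling}, as already used in the proof of Proposition~\ref{pro33}). For any $\beta > 0$, the function $\lambda \mapsto \lambda^{\beta} e^{-\lambda}$ is bounded and holomorphic on a sector $\Sigma(w)$ with $w \in (0,\pi/2)$, so that $\cL^\beta e^{-\cL}$ is bounded on $L^p(X)$. Applying this with $\beta = \tfrac{1}{2} \pm \epsilon - \alpha > 0$ controls the two factors by $C\|g\|_p$ and concludes the argument.

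It remains to verify the domain hypothesis of Theorem~\ref{thm31}. For $g$ in the dense subspace $L^2(X) \cap L^p(X)$, analyticity of the semigroup places $e^{-\cL} g$ in the domain of every non-negative power of $\cL$ (on both $L^2$ and $L^p$), so $\cL^{-\alpha} e^{-\cL} g$ lies in $D(\cL_p^{\frac{1}{2}+\epsilon}) \cap D(\cL_p^{\frac{1}{2}-\epsilon})$, making Theorem~\ref{thm31} applicable. A standard density argument then extends the resulting bound to all $g \in L^p(X)$. There is no real obstacle beyond this bookkeeping: the role of the hypothesis $\alpha < \tfrac{1}{2}$ is precisely to leave room for $\epsilon$ strictly inside $(0, \tfrac{1}{2} - \alpha)$, so that Theorem~\ref{thm31} can be invoked without approaching the forbidden endpoint $\epsilon = 0$ (which would correspond to the still-open $L^p$-boundedness of the Riesz transform itself).
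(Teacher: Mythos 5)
Your proof is correct and follows essentially the same route as the paper: apply Theorem~\ref{thm31} with $p_0=p_1=p$ to $\cL^{-\alpha}e^{-\cL}g$, pick $\epsilon$ small enough that the exponents $\tfrac12\pm\epsilon-\alpha$ are non-negative, and absorb the factors $\cL^{\frac12\pm\epsilon-\alpha}e^{-\cL}$ using boundedness of $\cL^{\beta}e^{-\cL}$ on $L^p$ (the paper cites holomorphy of the semigroup where you cite the $H^\infty$ calculus, an immaterial difference). The extra domain and density bookkeeping you include is fine but not needed beyond what the paper already records.
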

\begin{proof}
Let $\epsilon > 0$ such that $\frac{1}{2} -\alpha -\epsilon \ge 0$. By Theorem \ref{thm31}
$$\| \Gamma \cL^{-\alpha} e^{-\cL} f \|_p^2 \le C \| \cL^{\frac{1}{2} -\alpha + \epsilon} e^{-\cL} f \|_{p}\| \cL^{\frac{1}{2} -\alpha - \epsilon} e^{-\cL}f \|_{p}.$$
The holomorphy of the semigroup $(e^{-t\cL})$ on $L^p$ implies that the operators $\cL^{\frac{1}{2} -\alpha + \epsilon} e^{-\cL}$ and $\cL^{\frac{1}{2} -\alpha - \epsilon} e^{-\cL}$
are bounded on $L^p$. This gives the corollary. \end{proof} 

This corollary was  proved in \cite{Li} in the setting of Riemannian manifolds.  See also \cite{Devyver} for related results on some fractal-like cable systems. 
 
 \section{The case $p > 2$}\label{sec5}
 
The sole reason to take $p$ in $ (1,2]$ in Theorem \ref{thm31} is  to use the  boundedness of the Littlewood-Paley-Stein functional $H_\Gamma$ on $L^p(X)$ (cf. Theorem \ref{th21}). If $p > 2$ and if one knows  that $H_\Gamma$ is bounded on $L^p(X)$, then \eqref{eq31} holds by the same proof. 

Suppose now that $X = M$ is a complete non-compact Riemannian manifold for which  the following inequality  holds for some constants $ \theta > 0$ and $c_{\theta}  > 0$, 
\begin{equation}\label{eq38}
| \nabla e^{-t \Delta} f |^2  \le c_{\theta} \,  e^{-\theta t\Delta} |\nabla  f|^2, \quad t >0.
\end{equation}
Since the semigroup $(e^{-t\Delta})$ is sub-Markovian, this inequality holds if 
$$ | \nabla e^{-t \Delta} f | \le \sqrt{c_\theta}\,  e^{-\theta t\Delta} |\nabla f|, \quad t >0.$$
The gradient estimate  \eqref{eq38}  is discussed in \cite{CoulhonCPAM} and  \cite{HQLi} (p. 373). Also
 \cite{ACDH} (Lemma 3.3) gives some conditions under which \eqref{eq38} holds.  
In particular, \eqref{eq38} is more general than assuming the manifold to have non-negative Ricci curvature, see  
Proposition \ref{propKato}  and the discussion below. 

\begin{proposition}\label{prop3-333}
Suppose \eqref{eq38}. Let $p \in [2, \infty)$ and 
let $p_0, p_1 \in (1, \infty)$ such that $\frac{1}{p_0} + \frac{1}{p_1} = \frac{2}{p}$. Then for any $\epsilon \in (0, \frac{1}{2}]$, there exists a constant $C_\epsilon = C_\epsilon(p, p_0, p_1)$ such that
\begin{equation}
\| \nabla f \|_p^2 \le C_\epsilon \| \Delta^{\frac{1}{2} + \epsilon} f \|_{p_0}\| \Delta^{\frac{1}{2} - \epsilon} f \|_{p_1}
\end{equation}
for all $f \in D(\Delta^{\frac{1}{2} + \epsilon}) \cap D(\Delta^{\frac{1}{2} - \epsilon})$.
\end{proposition} 

Proposition \ref{prop3-333}  extends one of the main results in \cite{CoulhonCPAM} (cf. Theorem 4.1). More precisely, it is proved there that if the gradient estimate \eqref{eq38} (with $\theta = 1$) is satisfied, then the multiplicative inequality 
$$\| \nabla f \|_p^2 \le C \|  f \|_p\| \Delta f \|_p$$
holds for $p \in [2, \infty)$. 

\begin{proof} As we already mentioned before, the above result can be proved as for  Theorem \ref{thm31} as soon as we prove that the Littlewood-Paley-Stein functional 
$H_\nabla$ is bounded on $L^p(M)$ for all $p \in [2, \infty)$. To do this, we write $\nabla e^{-t\Delta} = \nabla e^{-\frac{t}{2}\Delta} e^{-\frac{t}{2}\Delta}$ and hence by \eqref{eq38}
  $$\int_0^\infty | \nabla e^{-t\Delta} f |^2\, dt \le  c_{\theta}  \int_0^\infty  e^{-\frac{\theta t}{2}\Delta} | \nabla e^{-\frac{t}{2} \Delta} f |^2\, dt.$$
By P.A. Meyer's inequality of the Appendix below, 
the functional $S$  defined by 
 $$S(f) = \left( \int_0^\infty  e^{-\frac{\theta t}{2}\Delta} | \nabla e^{-\frac{t}{2} \Delta} f |^2\, dt \right)^{1/2}$$
is bounded on $L^p(M)$ for all $p \in [2, \infty)$. Therefore, $H_\nabla$ is bounded on $L^p(M)$ for $p \in [2, \infty)$.  
\end{proof}

Now we discuss a condition under which the gradient estimate \eqref{eq38} is satisfied. We define  the Hodge-de Rham Laplacian 
$$ \vec{\Delta} := {\bf d} {\bf d}^* + {\bf d}^* {\bf d}$$
on $L^2$ of $1$-differential forms over $M$. Here ${\bf d}$ denotes the exterior derivative and ${\bf d}^*$ its adjoint. It is a classical fact that 
$\vec{\Delta}$ is a self-adjoint operator with an appropriate domain. The Weitzenb\"ock  formula states that 
$$ \vec{\Delta} = \tilde{\Delta} + {\mathcal R}$$
where $\tilde{\Delta} = \nabla^* \nabla$ is the rough Laplacian ($\nabla$ is the connexion) and ${\mathcal R}$ is the Ricci tensor. We denote by 
$V(x)$ the smallest eigenvalue of ${\mathcal R}(x)$  and set $W(x) = \max(W_1(x), 0)$ where $W_1(x)$ is the largest  eigenvalue of 
${\mathcal R}(x)$. The well known domination property states that 
\begin{equation}\label{eq-domination} 
| e^{-t \vec{\Delta}} \omega(x) | \le e^{-t(\Delta + W + V)} |\omega(x)|
\end{equation}
for all $t > 0$ and for all  $1$-form $\omega$ in $L^2$. Note that if $M$  has non-negative Ricci curvature then $V \ge 0$ and the above domination shows that the semigroup on forms is dominated by the semigroup $e^{-t \Delta}$ on functions. Since $V$ is in general non-positive one needs conditions to guarantee that the Schr\"odinger semigroup $e^{-t(\Delta + W + V)}$ exists. This is the case at least on $L^2(M)$  if  $|V|$ is $\Delta$-bounded in the quadratic form sense with bound $< 1$. 

\begin{proposition}\label{propKato} 
Suppose that the semigroups $(e^{-t(\Delta + W + V)})$ and $(e^{-t(\Delta + 2(W + V))})$ exist on $L^2(M)$ and that $(e^{-t(\Delta + 2(W + V))})$ is uniformly bounded on $L^\infty(M)$. Then the gradient estimate \eqref{eq38} holds with $\theta = 1$.
\end{proposition}
\begin{proof} Let  $h \in L^2(M)$. By the Feynman-Kac formula and the Cauchy-Schwarz inequality 
\begin{eqnarray*}
e^{-t(\Delta + W + V)} h (x) &=& \EE_x \left[ e^{-\int_0^t (W+V)(X_s)\, ds} h(X_t) \right]\\
 &\le& \left( \EE_x \left[ e^{-\int_0^t 2(W+V)(X_s)\, ds} \right] \right)^{1/2} \left( \EE_x h^2(X_t) \right)^{1/2}\\
 &=& \left( e^{-t(\Delta + 2 (W + V))} 1 \right)^{1/2} \left( e^{-t\Delta} h^2 \right)^{1/2}.
 \end{eqnarray*}
 Therefore, if $(e^{-t(\Delta + 2(W + V))})$ is uniformly bounded on $L^\infty(M)$, there exists a constant $c > 0$ such that for all $t > 0$
 $$ | e^{-t(\Delta + 2 (W + V))} 1 | \le c.$$
 It follows that 
 $$ | e^{-t(\Delta + W + V)} h (x) |^2 \le c\,  e^{-t\Delta} h(x)^2.$$
 This and the  domination property \eqref{eq-domination} applied with $ \omega = {\bf d} f$ give 
 $$ | e^{-t \vec{\Delta}} {\bf d} f |^2 \le c\, e^{-t\Delta} | {\bf d} f |^2.$$
 Finally, by the commutation property  $e^{-t\vec{\Delta}} {\bf d} = {\bf d} e^{-t \Delta}$, we obtain the proposition. 
\end{proof} 

Next, suppose that there exist $\alpha > 0$ and $\gamma \in [0, 1)$ such that 
\begin{equation}\label{eq-Miyadera}
\int_0^\alpha \| 2(W+V)e^{-t \Delta} f \|_1\, dt \le \gamma \| f \|_1.
\end{equation} 
In this case, $2(W+V)$ is called a {\it Miyadera perturbation of $\Delta$}. This estimate implies that $\Delta + 2(W+V)$ is the generator of a strongly continuous semigroup on $L^1(M)$. The class of Miyadera perturbations is larger than the well known Kato class. For all this we refer to \cite{Voigt}. 
\begin{proposition}\label{propKato2}
Suppose \eqref{eq-Miyadera}. Given  $\epsilon \in (0, \frac{1}{2}]$ and $\nu > 0$. Then for all $p \in (2, \infty)$ and 
 $p_0, p_1 \in (1, \infty)$ such that $\frac{1}{p_0} + \frac{1}{p_1} = \frac{2}{p}$, 
\begin{equation*}
\| \nabla f \|_p^2 \le C_\epsilon \| (\Delta + \nu)^{\frac{1}{2} + \epsilon} f \|_{p_0}\| (\Delta + \nu)^{\frac{1}{2} - \epsilon} f \|_{p_1}
\end{equation*}
for all $f \in D(\Delta^{\frac{1}{2} + \epsilon}) \cap D(\Delta^{\frac{1}{2} - \epsilon})$.
\end{proposition} 

\begin{proof}
We argue as in the proof of the previous proposition. Since $W+V$ and $2(W+V)$ are Miyadera  pertubations of $\Delta$, both $\Delta + W + V$ and $\Delta + 2(W+V)$ are generators of semigroups on $L^1(M)$. In particular, there exists a contants $a \ge 0$ such that $e^{-t(\Delta + 2(W+V) + a)}$ is uniformly bounded on $L^\infty(M)$. In addition, the domination \eqref{eq-domination} obviously holds with the additional term $e^{-t a}$ on both sides. We can now argue  exactly as in the previous proof to obtain the gradient estimate
$$ | \nabla e^{-t (\Delta + a)} f |^2 \le c\, e^{-t(\Delta + a)} |\nabla f |^2.$$
Arguing again as before with $\Delta + a$ instead of $\Delta$, the later inequality implies the boundedness of the Littlewood-Paley-Stein functional associated with $\Delta + a$ on $L^p(M)$ and hence
\begin{equation*}
\| \nabla f \|_p^2 \le C_\epsilon \| (\Delta + a)^{\frac{1}{2} + \epsilon} f \|_{p_0}\| (\Delta + a)^{\frac{1}{2} - \epsilon} f \|_{p_1}.
\end{equation*}
For $\nu > 0$, notice that $\Delta + a = (\Delta + a)(\Delta + \nu)^{-1} (\Delta + \nu)$ and $(\Delta + a)(\Delta + \nu)^{-1}$ is a bounded operator. This allows to bound the norms of $(\Delta + a)^{\frac{1}{2} \pm \epsilon} f$ by those of $(\Delta + \nu)^{\frac{1}{2} \pm \epsilon} f$ and we obtain  the proposition. 
\end{proof} 

Some similar  arguments to the previous ones can be found in \cite{XDLi}, Section 2. The assumptions there are more restrictive but  the results give boundedness of the (global or local) Riesz transform.


\section{Littlewood-Paley-Stein functionals on differential forms}\label{sec4}

Let $M$ be  again a complete Riemannian manifold and consider as before  the Hodge-de Rham Laplacian  $ \vec{\Delta} = {\bf d} {\bf d}^* + {\bf d}^* {\bf d}$
on $L^2$ of $1$-differential forms over $M$. 
Recall the commutation property 
$$ {\bf d}^* \vec{\Delta} = \Delta {\bf d}^* \quad {\rm and} \quad  {\bf d} \Delta = \vec{\Delta}{\bf d}.$$
The first equality is on forms and the second one is on functions. 
It follows from this that 
$$ ({\bf d} e^{-t\Delta})^* =  e^{-t{\Delta}}{\bf d}^* =  {\bf d}^* e^{-t\vec{\Delta}}$$
and hence  the Littlewood-Paly-Stein  functional 
$$ \cev{H} \omega = \left( \int_0^\infty | {\bf d}^* e^{-t\vec{\Delta}} \omega |^2 \, dt \right)^{1/2}$$
on differential forms $\omega  \in L^2$ can be seen as a "dual" of the functional $H_{{\bf d}}$ on functions where
$$ H_{{\bf d}} f = \left( \int_0^\infty | {\bf d} e^{-t \Delta} f |^2 \, dt \right)^{1/2}.$$
The functional $H_{{\bf d}}$ is the same as $H_{\nabla}$ where we have replaced the gradient by the differential ${\bf d}$. Since 
$H_{{\bf d}}$ is always bounded on $L^p(M)$ for $p \in (1, 2]$ by Theorem \ref{th21}, one would expect $\cev{H}$ to be bounded on $L^q$ for all
$q \in [2, \infty)$. The biggest advantage of $\cev{H}$ is its clear relationship to the Riesz transform as shown in our next result. 
Let us recall the {\it reverse Riesz inequality} on $L^q(M)$
\begin{equation}\label{eq4kappa}
\| \Delta^{1/2} f \|_q \le C \| \nabla f \|_q
\end{equation}
for some constant $C$ and all $f \in W^{1,q}(M)$. A standard  duality argument shows  that the boundedness of ${\bf d} \Delta^{-\frac{1}{2}}$ on $L^p(M)$ implies the reverse Riesz inequality 
\eqref{eq4kappa} on the dual space $L^{p'}(M)$.  

\begin{theorem}\label{thm411} Let $p \in (1, \infty)$.  
Consider the following assertions
\begin{enumerate}
\item The Riesz transform ${\bf d}\Delta^{-\frac{1}{2}}$ is bounded on $L^p(M)$
\item $ \cev{H}$ is bounded on $L^{p'}$ of $1$-differential forms (with values in $L^{p'}(M)$)
\item The reverse Riesz inequality \eqref{eq4kappa} holds on $L^{p'}(M)$ 
\item $\cev{H}$ is bounded on $L^{p'}$ of exact forms (i.e., $\omega = {\bf d} f \in L^{p'}$).
\end{enumerate}
Then assertions $1)$ and  $ 2)$ are equivalent,  and $3)$ and $ 4)$ are equivalent. 
\end{theorem} 
In order to prove the theorem we start with the following lemma. 
\begin{lemma} Let $\kappa \ge 0$. Then 
${\bf d} (\Delta + \kappa)^{-\frac{1}{2}}$  is bounded on $L^p$ {\it if and only if}
$$ \| \int_0^\infty {\bf d} e^{-t(\Delta+ \kappa)} F(t) \, dt \|_p \le C \| F \|_{L^p(L^2_t)} := C \| \left( \int_0^\infty | F(t) |^2\, dt \right)^{1/2} \|_p$$
for some constant $C$ and all $F \in L^p(L^2_t)$. 
\end{lemma}

\begin{proof} We prove the direct implication. 
We have
  \begin{eqnarray*}
  \| \int_0^\infty {\bf d} e^{-t(\Delta +\kappa)} F(t) \, dt \|_p &= &
 \| {\bf d} (\Delta + \kappa)^{-\frac{1}{2}} \int_0^\infty  (\Delta + \kappa)^{\frac{1}{2}} e^{-t(\Delta + \kappa)} F(t) \, dt \|_p\\
 &\le& C_0  \| \int_0^\infty  (\Delta + \kappa)^{\frac{1}{2}} e^{-t(\Delta+\kappa)} F(t) \, dt \|_p.
 \end{eqnarray*}
In order to estimate the latest term we argue by duality.  Let $g \in L^{p'}(M)$. Then
\begin{eqnarray*} 
&&\hspace{-2cm}  | \langle \int_0^\infty (\Delta + \kappa)^{\frac{1}{2}} e^{-t(\Delta+ \kappa)} F(t) \, dt, g \rangle | \\
&=& |   \int_M \int_0^\infty  F(t)  (\Delta + \kappa)^{\frac{1}{2}} e^{-t(\Delta+ \kappa)}g\, dt\, dx |\\
&\le& \int_M \left( \int_0^\infty | F(t) |^2\, dt \right)^{1/2}   \left( \int_0^\infty | (\Delta + \kappa)^{\frac{1}{2}} e^{-t(\Delta+ \kappa)} g |^2\, dt \right)^{1/2}\, dx\\
&\le& C_1 \| F \|_{L^p(L^2_t)} \|g \|_{p'}
\end{eqnarray*}
where we used the square function estimate for $\Delta + \kappa$ on $L^{p'}$. Note that $e^{-t(\Delta+ \kappa)}$ is a sub-Markovian semigroup and hence the generator has bounded square functions on $L^r(M)$ for all $r \in (1, \infty)$. This fact was already mentioned and used several times in the previous sections. 

For the converse, let $f \in L^p(M)$ and  take $F(t) = (\Delta + \kappa)^{\frac{1}{2}} e^{-t(\Delta+ \kappa)} f$. Again, by the square function estimate, $F \in L^p(L^2_t)$. Now, notice that 
 $\int_0^\infty {\bf d} e^{-t(\Delta+ \kappa)}  F(t) \, dt$ coincides  (up to a positive constant) with ${\bf d} (\Delta + \kappa)^{-\frac{1}{2}}$. 
\end{proof}

\begin{proof}[Proof of Theorem \ref{thm411}]
For $1) \Leftrightarrow 2)$, we apply  the previous lemma (with $\kappa = 0$).  Thus,  assertion $1)$ is equivalent to the fact that the operator 
$$ S: F \mapsto \int_0^\infty {\bf d} e^{-t\Delta} F(t)\, dt$$
is bounded from $L^p(L^2_t)$ into $L^p$ (of $1$- forms). 
 This is equivalent to the boundedness from $L^{p'}$ into $L^{p'}(L^2_t)$ of  its adjoint $S^*$. Now,  $S^* (\omega) = {\bf d}^* e^{-t\vec{\Delta}} \omega$ and we obtain  assertion  $2)$.

Next we prove  $3) \Leftrightarrow 4)$. We have 
\begin{eqnarray*} 
\| \left(\int_0^\infty  | {\bf d}^* e^{-t \vec{\Delta}} {\bf d} f |^2 \, dt \right)^{1/2} \|_{p'} &=& \| \left( \int_0^\infty | {\bf d}^* {\bf d} e^{-t {\Delta}}  f |^2 \, dt \right)^{1/2} \|_{p'}\\
&=& \| \left( \int_0^\infty | \Delta^{\frac{1}{2}} e^{-t {\Delta}}   \Delta^{\frac{1}{2}} f |^2 \, dt \right)^{1/2} \|_{p'}.
\end{eqnarray*}
By upper and lower square function estimates the latest term is equivalent to 
$\| \Delta^{\frac{1}{2}} f \|_{p'}$.   This  finishes the proof. 
\end{proof}

If one is interested in the local Riesz transform ${\bf d} (\Delta + \kappa)^{-1/2}$ for some $\kappa > 0$, then the above characterizations remain valid with the same  proofs.  More precisely,
${\bf d} (\Delta + \kappa)^{-1/2}$ is bounded on $L^p$ {\it if and only if} the functional
$$\cev{H}_\kappa \omega = \left( \int_0^\infty | {\bf d}^* e^{-t(\vec{\Delta} + \kappa)} \omega |^2 \, dt \right)^{1/2}$$
is bounded on $L^{p'}$ of $1$-forms. The reverse Riesz inequality 
$$ \| \Delta^{1/2} f \|_{p'} \le C ( \| \nabla f \|_{p'} + \| f \|_{p'})$$
holds {\it if and only if} 
$$ \| \cev{H}({\bf d}f) \|_{p'} \le C' ( \| \nabla f \|_{p'} + \| f \|_{p'}).$$
We mention that the implication  $2) \Rightarrow 1)$ in Theorem \ref{thm411} was proved  in \cite{Cometx}.

The  arguments in the proof of Theorem \ref{thm411} also  show that the boundedness of the Riesz transform 
can be characterized  by a lower bound for the Littlewood-Paley-Stein functional.
\begin{proposition}\label{prop411} Let $p \in (1, \infty)$.   Then the  Riesz transform ${\bf d} \Delta^{-1/2} $ is bounded on $L^p$  if and only if there exists a constant $C' > 0$ such that 
$$ \| \cev{H} \omega \|_p \ge C' \| \omega \|_p \quad  \forall \omega = {\bf d} f \in L^p.$$
\end{proposition} 
\begin{proof} We have seen in the proof of $3) \Leftrightarrow 4)$  of Theorem \ref{thm411} that 
$ \| \cev{H} ({\bf d}f) \|_p $ is equivalent to $ \| \Delta^{1/2} f \|_{p}$. This clearly implies the assertion of the proposition. 
\end{proof}

\begin{remark} Combining  Theorem \ref{thm411} and Proposition \ref{prop411} yields  that 
$$ \| \cev{H} \omega \|_{p'} \le C \| \omega \|_{p'} \quad   {\rm for\ all\ forms\ of\ order\ one}\ \omega \in L^{p'}$$
is equivalent to 
$$\| \cev{H} \omega \|_p \ge C' \| \omega \|_p \quad   {\rm for\ all\ exact\ forms } \ \omega = {\bf d} f \in L^p.$$
While it is easy to obtain the lower bound from the upper bound by a duality argument, it came as a surprise (at least to the author) that a  lower bound on exact forms implies an upper bound for all forms of order $1$ on the dual space.  
\end{remark} 

As mentioned in the introduction, there are many manifolds for which  it is known that the Riesz transform is bounded on $L^p$. For such manifolds,  the upper bound of $\cev{H}$ holds on $L^{p'}$ of $1$-forms and the lower bound holds on $L^p$ of exact forms. If $M$ is a Vicsek manifold then it is proved in \cite{Chen-Coulhon-Russ}, Theorem 5.3,  that the Riesz transform is bounded on $L^p$ {\it if and only if} $p \in (1,2]$ and  the reverse Riesz transform holds on $L^q$ {\it if and only if} $q \in [2, \infty)$. It follows from the previous results that for this manifold, $\cev{H}$ is bounded on $L^r$ of $1$-forms {\it if and only if} $r \in [2, \infty)$ and it is unbounded even for exact forms on $L^r$  for all $r \in (1, 2)$. The lower estimate of Proposition \ref{prop411} holds {\it if and only if} $r \in (1, 2]$.

\section{Appendix:  P.A. Meyer's inequality}\label{App1}

  P.A. Meyer \cite{Meyer}, \cite{Meyer2}  proved variations of Littlewood-Paley-Stein inequalities. One of them states  that the functional
  \begin{equation}\label{defS0}
  S_0(f) = \left( \int_0^\infty e^{-t \sqrt{\Delta}} | \nabla e^{-t\sqrt{\Delta}} f |^2\, tdt \right)^{1/2}
  \end{equation} 
  is bounded on $L^p(M)$ for all $p \in [2, \infty)$. 
 The lecture notes \cite{Meyer} and \cite{Meyer2} are  collections of talks given at the seminar of Probability at the university of Strasbourg. 
   They contain a lot of interesting  material. The setting there  is general but   the proofs and statements are 
   unfortunately buried in different notations which  
 sometimes change from one {\it S\'eminaire}   to another and that  makes the reading rather difficult.  The proof of the aforementioned  result on $S_0$ is heavily 
 based on probabilistic arguments. It would be very interesting to have an analytic and relatively short proof  
 but this seems to be  missing in the literature. In this Appendix, we give such a proof  when the gradient 
 estimate \eqref{eq38} is satisfied. This is the context of Section \ref{sec5} where we used P.A. Meyer's inequality. 
 We prove the  estimate  with the  heat semigroup $e^{-t\Delta}$ in the definition of $S_0$ and not the Poisson 
 one $e^{-t \sqrt{\Delta}}$ (see \eqref{defS} below). This is more general since one can deduce the estimates for $S_0$ 
 by arguments based on the subordination formula 
 $$ e^{-t\sqrt{\Delta}} = \int_0^\infty e^{-u} u^{-1/2} e^{- \frac{t^2}{4u} \Delta} \, du.$$
We leave the details to the reader and focus on the proof of the proposition below. 

Let $M$ be a complete Riemannian manifold. We assume in this section that the gradient estimate  \eqref{eq38} holds with some constants $\theta > 0$ and $c_{\theta} > 0$. Then  we have
\begin{proposition}\label{S-meyer}
Given two constants  $\alpha, \beta > 0 $ and define the functional 
\begin{equation}\label{defS}
 S (f) = \left(  \int_0^\infty e^{-t \alpha \Delta} | \nabla e^{-t\beta \Delta} f |^2\, dt \right)^{1/2}.
  \end{equation} 
Then $S$ is bounded on $L^p(M)$ for all $p \in [2, \infty)$.
\end{proposition} 

\begin{proof} First, observe that by the change of variable  $s= t \beta$ we can always assume that $\beta = 1$.\\
 Let  $ p \in [2, \infty)$. It is easy to see that the functional $S$ is quasi-linear and bounded on $L^2(M)$. Therefore, by interpolation, we may  assume in the sequel that $p > 4$. 

Let $0 < b < \infty$ and set 
$$I_{ b}(f) = \left( \int_0^b e^{-t \alpha \Delta} | \nabla e^{-t\Delta} f |^2\, dt \right)^{1/2}.$$
Then for $f \in C_c^\infty(M)$, 
\begin{eqnarray*}
\| I_{ b}(f) \|_p^2 &= & \| \int_0^b e^{-t \alpha\Delta} | \nabla e^{-t\Delta} f |^2\, dt \|_{p/2}\\
&\le& \int_0^b \| e^{-t\alpha \Delta} | \nabla e^{-t\Delta} f |^2 \|_{p/2}\, dt \\
&\le& \int_0^b \|   \nabla e^{-t\Delta} f \|_p^2\, dt \\
&\le& c_\theta \int_0^b \| e^{-t\theta \Delta} |\nabla f|^2 \|_{p/2}\, dt \\
& \le & c_\theta b\, \| \nabla f \|_p^2 < \infty.
\end{eqnarray*}
Of course, this estimate is not good enough but we want to have beforehand that $\| I_{ b}(f) \|_p$ is finite for fixed $0 < b < \infty$.  This will be used later in this proof. Next, let $q \in (1, 2)$ be such that $\frac{2}{p} + \frac{1}{q} = 1$. Let $0\le \varphi \in L^q(M)$. We argue by duality and thus we need to estimate
$$ J_{ b} = \int_M I_{b}(f)^2 \varphi \, dx,$$
where we denote again by $dx$ the integration with respect to the Riemannian measure on $M$. We have
\begin{eqnarray*}
J_{ b} &=&  \int_0^b  \int_M \nabla e^{-t\Delta} f. \nabla e^{-t\Delta} f. e^{-t \alpha\Delta} \varphi \, dx\, dt\\
&=& \int_0^b  \int_M  e^{-t\Delta} f. \Delta  e^{-t\Delta} f. e^{-t\alpha\Delta} \varphi \, dx\, dt - 
\int_0^b  \int_M  e^{-t\Delta} f. \nabla e^{-t\Delta} f. \nabla e^{-t\alpha\Delta} \varphi \, dx\, dt. 
\end{eqnarray*}
  Integrating by parts (first with respect to $t$ and then with respect to $x$) the first term in the above difference coincides with
  \begin{eqnarray*}
  &&  \frac{1}{2}\int_M (f^2 - ( e^{-t b \Delta}f)^2) \varphi\, dx - \frac{\alpha}{2} \int_M \int_0^b (e^{-t\Delta} f)^2 \Delta e^{-t\alpha\Delta} \varphi\, dt\, dx\\
  &=& \frac{1}{2}\int_M (f^2 - ( e^{-t b \Delta}f)^2) \varphi\, dx -  \alpha \int_M \int_0^b e^{-t\Delta} f. \nabla e^{-t\Delta} f. \nabla   e^{-t \alpha\Delta} \varphi\, dt\, dx.
  \end{eqnarray*}
  Hence
  \begin{equation}\label{eq51} 
  J_{ b} = \frac{1}{2}\int_M (f^2 - ( e^{-t b \Delta}f)^2) \varphi\, dx - (1+\alpha)  \int_M \int_0^b e^{-t\Delta} f. \nabla e^{-t\Delta} f. \nabla  e^{-t\alpha \Delta} \varphi\, dt\, dx.
  \end{equation} 
  Let $\delta \in (0,1)$  and write  $e^{-t\Delta} = e^{- \delta t\Delta} e^{-(1-\delta)t\Delta}$. Using the gradient estimate \eqref{eq38} it follows that 
  $$ | e^{-t\Delta} f. \nabla e^{-t\Delta} f. \nabla  e^{-t \alpha \Delta} \varphi | \le \sqrt{c_\theta} (\sup_{t \ge 0} e^{-t\Delta} |f|) ( e^{-t \delta \theta \Delta} | \nabla e^{- (1-\delta) t \Delta} f |^2 )^{1/2}
  |\nabla  e^{-t \alpha \Delta} \varphi |.$$
  Thus,
  \begin{eqnarray*}
  &&\hspace{-1cm} | \int_M \int_0^b e^{-t\Delta} f. \nabla e^{-t\Delta} f. \nabla  e^{-t \alpha \Delta} \varphi\, dt\, dx | \\
  &\le& \sqrt{c_\theta} \int_M \sup_{t \ge 0} e^{-t\Delta} |f| \int_0^b \left( e^{-t \delta \theta \Delta} | \nabla e^{- (1-\delta) t \Delta} f |^2 \right)^{1/2}
  |\nabla  e^{-t \alpha \Delta} \varphi |\, dt\, dx\\
  &=&  \sqrt{c_\theta} \int_M \sup_{t \ge 0} e^{-t\Delta} |f| \int_0^{b(1-\delta)}  \left( e^{-t \frac{\delta \theta}{1-\delta} \Delta} | \nabla e^{- t \Delta} f |^2 \right)^{1/2}
  |\nabla  e^{-t \frac{\alpha}{1-\delta} \Delta} \varphi |\, dt\, dx\\
  &\le & \sqrt{c_\theta} \int_M \sup_{t \ge 0} e^{-t\Delta} |f| \left( \int_0^{b(1-\delta)}  e^{-t \frac{\delta \theta}{1-\delta} \Delta} | \nabla e^{- t \Delta} f |^2\, dt \right)^{1/2} 
  \left(\int_0^\infty  | \nabla e^{-t \frac{\alpha}{1-\delta} \Delta} \varphi |^2\, dt \right)^{1/2}\, dx\\
  &\le& c_1 \| f \|_p \| \left( \int_0^{b}  e^{-t \frac{\delta \theta}{1-\delta} \Delta} | \nabla e^{- t \Delta} f |^2\, dt \right)^{1/2}  \|_p  \| \varphi \|_q
  \end{eqnarray*}
  for some constant $c_1$ independent of  $b$ since we only use the boundedness on $L^p(M)$ 
  of the maximal operator $f \mapsto \sup_{t\ge 0} e^{-t\Delta}|f|$  and the fact that the Littlewood-Paley-Stein functional $H_\nabla$ is bounded on $L^q(M)$ because  $ q \in (1, 2)$. 
  We chose $\delta = \frac{\alpha}{\theta+ \alpha}$ so that $e^{-t \frac{\delta \theta}{1-\delta} \Delta} = e^{-t \alpha \Delta}$ and hence we have proved that 
  $$ | \int_M \int_0^b e^{-t\Delta} f. \nabla e^{-t\Delta} f. \nabla  e^{-t \alpha \Delta} \varphi\, dt\, dx | \le c_1 \| f \|_p \| \left(\int_0^b e^{-t \alpha \Delta} | \nabla e^{-t  \Delta} f |^2\, dt \right)^{1/2} \|_p  \| \varphi \|_q.$$
 We insert this in \eqref{eq51} to obtain 
\begin{equation*}
\int_M I_{ b}(f)^2 \varphi \, dx  \le \| f \|_p^2 \| \varphi\|_q + (1+\alpha)c_1 \| f\|_p \| I_{b}(f) \|_p \|\varphi\|_q.
\end{equation*} 
Since this inequality is valid for all non-negative $\varphi \in L^q(M)$ and $\| I_{ b}(f) \|_p$ is finite, we obtain $I_{ b}(f)^2 \in L^{p/2}(M)$ and
$$ \| I_{ b}(f)^2 \|_{p/2} \le \| f \|_p^2 + (1+\alpha)c_1  \| f\|_p \| I_{b}(f) \|_p.$$
This obviously gives
$$ \| I_{ b}(f) \|_p \le c\, \|f\|_p$$
with a constant $c > 0$ independent of $b$. We let  $b \to + \infty$ to obtain
\begin{equation}\label{eq54}
 \| \left( \int_0^\infty e^{-t \alpha \Delta}| \nabla e^{-t\Delta} f |^2\, dt \right)^{1/2} \|_p \le c\,  \|f \|_p
 \end{equation}
for all $f \in C_c^\infty(M)$. By density arguments  this inequality extends to all $f \in L^p(M)$. 
A way to see this is to write for $f \in C_c^\infty(M)$
$$ | \nabla f |^2 = - \int_0^\infty \frac{d}{dt} | \nabla e^{-t\Delta} f |^2 = 2 \int_0^\infty \nabla e^{-t\Delta} \Delta f. \nabla e^{-t\Delta} f\, dt$$
from which one obtains as in the proof of Proposition \ref{prop3-333} (applied to $f$ and $\Delta f$) 
that for some constant $C > 0$
$$\| \nabla  f \|_p^2 \le C \| \Delta f \|_p \| f \|_p.$$
This extends to all $f $ in the domain of $\Delta$ as an operator on $L^p(M)$ and hence  
$$ \| \nabla e^{-t\Delta} f \|_p \le \frac{C}{\sqrt{t}} \|f \|_p$$
for all $f \in L^p(M)$. It follows from this that for $\epsilon > 0$ and $b < \infty$, if $(f_n)_n \in C_c^\infty(M)$ converges to 
$f$ in $L^p$, then $ \left( \int_\epsilon^b e^{-t\alpha \Delta} | \nabla e^{-t \Delta} f_n |^2\, dt \right)^{1/2}$ converges to 
$ \left( \int_\epsilon^b e^{-t\alpha \Delta} | \nabla e^{-t \Delta} f |^2\, dt \right)^{1/2}$ in $L^p$. Now  \eqref{eq54} implies 
$$ \| \left( \int_\epsilon^b e^{-t \alpha \Delta}| \nabla e^{-t\Delta} f_n |^2\, dt \right)^{1/2} \|_p \le c\,  \|f_n \|_p$$
with constant $c$ independent of $n, \epsilon$ and $b$. We let $n \to \infty$ and then $\epsilon \to 0, b \to \infty$ to obtain 
\eqref{eq54} for all $f \in L^p(M)$. 
 
\end{proof}



\begin{thebibliography}{99}




\bibitem{ACDH} P. Auscher, T. Coulhon, X.T. Duong and S. Hofmann.
\newblock Riesz transform on manifolds and heat kernel regularity.
\newblock{\em Ann. Sci. Ecole Norm. Sup.} (4) 37 no. 6, 911--957, 2004. 


\bibitem{Bakry}
D.  Bakry.
\newblock Etude des transformations de Riesz  dans les vari\'et\'es
  riemanniennes \`a  courbure de Ricci  minor\'ee.
\newblock In {\em S\'eminaire de Probabilit\'es}, volume XXI of {\em Lecture
  {Notes} in {Math}.}, pages 137--172. Springer edition, 1987.
  
  
  
 \bibitem{Carbonaro}
 A. Carbonaro and O. Dragicevic.
 \newblock Functional calculus for generators of symmetric contraction semigroups. 
 \newblock{\em Duke Math. J.}  166 no. 5, 937--974, 2017. 
 
 
\bibitem{Carron}
G. Carron, T. Coulhon and A. Hassell.
\newblock Riesz transform and {$L^p$}-cohomology for manifolds with {E}uclidean
  ends.
\newblock {\em Duke Math. J.}, 133 (1) 59--93, 2006.



\bibitem{Li}
L. Chen. 
\newblock Sub-Gaussian heat kernel estimates and quasi Riesz transforms for $1 \le p \le 2$.
\newblock{\em Publ. Mat.} 59 (2015)  313--338. 

\bibitem{Chen-Coulhon-Russ}
L. Chen, T. Coulhon, J. Feneuil and E. Russ.
\newblock{ Riesz transform for $1 \le p \le 2$  without Gaussian heat kernel bound.}
\newblock{\em J. Geom. Anal.}  27 (2017), no. 2, 1489--1514.

\bibitem{ChenMagOuh}
P. Chen, J. Magniez and E.M. Ouhabaz.
\newblock The {H}odge--de {R}ham {L}aplacian and {$L^p$}-boundedness of {R}iesz
  transforms on non-compact manifolds.
\newblock {\em Nonlinear Anal.}, 125, 78--98, 2015.


\bibitem{Cometx} T. Cometx. 
\newblock{Littlewood-Paley-Stein functions for Hodge-de Rham and Schr\"odinger operators.}
\newblock{\em J. Geom. Anal.} 31 (2021), no. 7, 7568--7594.

\bibitem{CometxOuhabaz}
T. Cometx and E.M. Ouhabaz.
\newblock Littlewood-Paley-Stein functionals: an ${\mathcal R}$-boundedness approach.
\newblock {\em Ann. Inst. Fourier (Grenoble)}  74 (2024), no. 3, 1251--1296.

\bibitem{dePagter}
P. Cl\'ement,  B. de Pagter, F.A. Sukochev and H. Witvliet. 
\newblock Schauder decomposition and multiplier theorems. 
\newblock{\em Studia Math.} 138 no. 2, 135--163, 2000.

\bibitem{CoulhonDuong}
T. Coulhon and X.T.  Duong.
\newblock Riesz transforms for {$1\leq p\leq 2$}.
\newblock {\em Trans. Amer. Math. Soc.}, 351 (3) 1151--1169, 1999.

\bibitem{CoulhonCPAM}
T. Coulhon and X.T.  Duong.
\newblock Riesz transform and related inequalities on noncompact {R}iemannian
  manifolds.
\newblock {\em Comm. Pure Appl. Math.}, 56 (12) 1728--1751, 2003.

\bibitem{CDL-Studia}
T.  Coulhon, X.T.  Duong and X.D. Li.
\newblock Littlewood-{P}aley-{S}tein functions on complete {R}iemannian
  manifolds for {$1\leq p\leq 2$}.
\newblock {\em Studia Math.}, 154 (1) 37--57, 2003.


\bibitem{CDMY}
M.  Cowling, I.  Doust, A. McIntosh and A. Yagi.
\newblock Banach space operators with a bounded $H^\infty$ functional calculus. 
\newblock {\em J. Austral. Math. Soc. Ser. A} 60 no. 1, 51--89, 1996.


\bibitem{Cowling}
M. Cowling. 
\newblock Harmonic analysis on semigroups.
\newblock{\em Ann. of Math.}  (2) 117  no. 2,  267--283, 1983. 

\bibitem{Devyver}
B. Devyver, E. Russ and M. Yang. 
\newblock Gradient estimate for the heat kernel on some fractal-like cable systems and quasi-Riesz transforms
\newblock{\em Int. Math. Res. Not. IMRN} no. 18, 15537--15583, 2023. 

\bibitem{Fukushima}
M. Fukushima, Y. Oshima,  and M. Takeda. 
\newblock{\em Dirichlet forms and symmetric Markov processes.}
\newblock De Gruyter Stud. Math., 19 Walter de Gruyter, Berlin, 2011. 


\bibitem{Guillarmou-Hassell}
C. Guillarmou and A. Hassell.
\newblock Resolvent at low energy and {R}iesz transform for
              {S}chr\"{o}dinger operators on asymptotically conic manifolds (II).
\newblock {\em Ann. Inst. Fourier (Grenoble)}, {59} (4) 1553--1610, 2009.

\bibitem{Haak}
B. Haak and P. Kunstmann. 
\newblock Admissibility of unbounded operators and wellposedness of linear systems in Banach spaces.
\newblock{\em Integral Equations Operator Theory}  55 no. 4, 497--533, 2006.

  
  \bibitem{Hassell-Sikora} A. Hassell and A. Sikora. 
\newblock{Riesz transforms on a class of non-doubling manifolds.} 
\newblock{\em Comm. Partial Differential Equations} 44(2019), no.11, 1072--1099.
  
  
  \bibitem{HytonenII}
T.  Hyt\"{o}nen, J. van Neerven, M. Veraar  and
              L. Weis.
 \newblock Analysis in Banach Spaces. Vol. II. Probabilistic Methods and Operator Theory. 
 \newblock{\em Ergebnisse der Mathematik und ihrer Grenzgebiete. 3. Folge. A Series of Modern Surveys in Mathematics  67. Springer, Cham,} 2017.       
       
                           
\bibitem{KW}
N.J. Kalton and L. Weis.
\newblock The {$H^\infty$}-calculus and sums of closed operators.
\newblock {\em Math. Ann.}, 321 (2) 319--345, 2001.



\bibitem{LeMerdy}
C. Le~Merdy.
\newblock On square functions associated to sectorial operators.
\newblock {\em Bull. Soc. Math. France}, 132 (1) 137--156, 2004.


\bibitem{HQLi} H.Q. Li. 
\newblock{Estimation optimale du gradient du semi-groupe
de la chaleur sur le groupe de Heisenberg.}
\newblock{\em Journal of Functional Analysis} 236 (2006) 369--394. 

\bibitem{XDLi} X.D. Li. 
\newblock Riesz transforms for symmetric diffusion operators on complete
Riemannian manifolds.
\newblock{\em Rev. Mat. Iberoamericana}  22 (2006), no. 2, 591--648.


\bibitem{Meyer}
P.A. Meyer.
\newblock D\'emonstration probabiliste de certaines in\'egalit\'es de Littlewood-Paley. II. Les in\'egalit\'es
g\'en\'erales. 
\newblock{\em S\'eminaire de Probabilit\'es X (Premi\`ere partie, Univ. Strasbourg, Strasbourg,
ann\'ee universitaire 1974/1975),  164--174. Lecture Notes in Mathematics}, 511. Springer, Berlin,
1976.

\bibitem{Meyer2}
P.A. Meyer.
\newblock Retour sur la th\'eorie de Littlewood-Paley.
\newblock{\em  S\'eminaire de Probabilit\'es XV (Univ.
Strasbourg, Strasbourg, 1979/1980), 151?166. Lecture Notes in Mathematics}, 850. Springer,
Berlin, 1981. 

\bibitem{Ouh05}
E.M.  Ouhabaz.
\newblock {\em Analysis of Heat Equations on Domains. (LMS-31)}.
\newblock Princeton University Press, 2005.


 
 \bibitem{Sikora} A. Sikora.
 \newblock{ Riesz transform, Gaussian bounds and the method of wave equation.}
 \newblock{\em Math. Z.} 247 (3) 643--662, 2004. 
 
 \bibitem{Strichartz}
 R. S.  Strichartz.
 \newblock{Analysis of the Laplacian on the complete Riemannian manifold.}
 \newblock{\em  J. Funct. Anal.}  52, 48--79, 1983.
 
 \bibitem{Voigt}
 J. Voigt.
 \newblock Absorption semigroups, their generators, and Schr\"odinger semigroups. 
\newblock{\em J. Funct. Anal.}  67 (1986), no. 2, 167--205. 
 
 \bibitem{Weis}
 L.  Weis. 
 \newblock A new approach to maximal regularity.
\newblock {\em Proc. of the
Sixth International Conference on Evolution Equations and their Applica-
tions in Physical and Life Sciences (Bad Herrenalb, 1998) (Lumer (G.) and
Weis (L.), eds.), Lecture Notes in Pure and Appl. Math., vol. 215, Marcel
Dekker, New-York, 195-214,} 2001. 

\end{thebibliography}
\end{document}